\documentclass[12 pt]{article}%
\usepackage{amsmath, amsfonts, amsthm, color,latexsym}
\usepackage{amsmath, ulem}
\usepackage{amsfonts}
\usepackage{amssymb}
\usepackage{color, soul}
\usepackage{xcolor}
\usepackage{tikz-cd}
\usepackage[all]{xy}
\usepackage{graphicx}%
\providecommand{\U}[1]{\protect\rule{.1in}{.1in}}
\allowdisplaybreaks[4]
\newtheorem{theorem}{Theorem}[section]
\newtheorem{proposition}[theorem]{Proposition}
\newtheorem{corollary}[theorem]{Corollary}
\newtheorem{example}[theorem]{Example}
\newtheorem{examples}[theorem]{Examples}

\newtheorem{remarks}[theorem]{Remarks}
\newtheorem{lemma}[theorem]{Lemma}
\newtheorem{final remark}[theorem]{Final Remark}
\newtheorem{definition}[theorem]{Definition}
\allowdisplaybreaks[4]

\newcommand {\R}{\mathbb{R}}
\newcommand {\K} {\mathbb{K}}

\newcommand {\N} {\mathbb{N}}
\newcommand{\norma}[1]{\| #1 \|}
\newcommand{\conj}[2]{\left \{ {#1} \, : \, {#2} \right \}}

\newcommand{\inner}[2]{\langle {#1},{#2} \rangle}
\begin{document}

\title{A $p$-summability approach to the weak maximizing property}
\author{Oscar I. Blanco A.\thanks{Supported by a CAPES PhD scholarship (Grant 88887.001877/2024-00)}  \, and Vinícius C. C. Miranda\thanks{Supported by FAPESP (Grants 2025/08630-0 and 2023/12916-1) and FAPEMIG (Grant APQ-02622-26) \newline 2020 Mathematics Subject Classification:  46B20, 46B15, 46B25.
\newline Keywords: weak maximizing property; weakly $p$-summable sequence; norm-attaining operator; compact pertubation property.}}
\date{}
\maketitle

\begin{abstract} 
Motivated by the weak maximizing property ($\mathrm{WMP}$), we investigate $p$-summability conditions on maximizing sequences of bounded linear operators. Since the naive $p$-summability formulation is independent of $p$ and collapses to norm attainment for all operators, we introduce the weakly $p$-singular maximizing property ($\mathrm{SMP}_p$), based on maximizing sequences with no weakly $p$-summable subsequence. We completely characterize when the pairs $(\ell_p,\ell_q)$ and $(\ell_p,c_0)$ have the $\mathrm{SMP}_r$, revealing sharp contrasts with the $\mathrm{WMP}$. We also characterize the Schur property of order $p$ via the $\mathrm{WMP}$ and the $\mathrm{SMP}_r$. Under relative weak $p$-precompactness and suitable Dunford-Pettis-type assumptions, we characterize universal norm attainment for operators from $X$ to $Y$ in terms of either the $\mathrm{SMP}_q$ for adjoint operators or the
weak$^{*}$-to-weak$^{*}$ maximizing property, and derive corresponding duality consequences for the $\mathrm{SMP}_p$. Finally, we introduce
$p$-convergent perturbation properties for operators and their adjoints, characterize them for classical sequence spaces, and show that the $p$-convergent perturbation property is strictly weaker than
the $\mathrm{SMP}_p$.
\end{abstract}

\section{Introduction}

In \cite{danieleduardo}, Pellegrino and Teixeira proved that, for each $1 < p < \infty$ and each $1 \leq q < \infty$, every bounded linear $T: \ell_p \to \ell_q$ attains its norm if and only if there exists a non-weakly null maximizing sequence $(x_n)_n \subset S_{\ell_p}$. Recall that a maximizing sequence for a bounded linear operator $T$ is a sequence $(x_n)_n \subset S_X$ such that $\displaystyle \lim_{n \to \infty} \norma{T(x_n)} = \norma{T}$.  This result motivated the introduction of the so-called weak maximizing property, defined by Aron, García, Pellegrino and Teixeira in \cite{arongarciapelteix}: a pair of Banach spaces $(X, Y)$ satisfies the {\bf weak maximizing property} ($\mathrm{WMP}$, in short) if a bounded linear operator $T: X \to Y$ is norm-attaining whenever there exists a non-weakly null maximizing sequence for $T$. Recently, this property has been also studied in \cite{dantasjung, garcia-lirola, jung, miranda}. We also note that variants of the property were studied by Chakraborty \cite{cha} concerning the minimum norm, and by Luiz and the second named author in the setting of Banach lattices \cite{luizmiranda}.

Following a well-established line of investigation in the theory of Banach spaces that seeks to interpret properties by replacing weak convergence by weak $p$-summability (see, e.g., \cite{castillo, P-Schur, zeefou14, coarse, karnsinha2, sinhakarn}), it is natural to ask whether the weak maximizing property admits a meaningful $p$-summability version. More precisely, one could replace non-weakly null maximizing sequences by maximizing sequences which are not weakly $p$-summable, for some $1\leq p<\infty$. As we shall see, however, this first attempt does not lead to a genuinely new property.

We first show that the particular choice of $p$ does not affect the resulting property:

\begin{proposition}\label{primeiro}
Let $X$ and $Y$ be Banach spaces, let $T\in\mathcal L(X;Y)$, and let
$1\leq p<\infty$. Suppose that $T$ admits a maximizing sequence
$(x_n)_n\subset S_X$ which is not weakly $p$-summable. Then, for every
$1\leq q<\infty$, $T$ admits a maximizing sequence which is
not weakly $q$-summable.
\end{proposition}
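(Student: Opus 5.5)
Recall that a sequence $(x_n)_n$ in $X$ is weakly $p$-summable if $\sum_n |x^*(x_n)|^p<\infty$ for every $x^*\in X^*$. If such a sum is finite then $x^*(x_n)\to 0$, so every weakly $p$-summable sequence is weakly null. The plan is to show that the hypothesis of the proposition can always be met, for every $q$, as soon as $T$ has \emph{any} maximizing sequence at all. So we do not start from the given sequence; we build a new one that is not weakly $q$-summable for trivial reasons.

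\textbf{Case $T\neq 0$.} Take any maximizing sequence $(y_n)_n\subset S_X$, which exists by the definition of $\norma{T}$. The single point to arrange is that the new sequence does not tend to zero weakly. Fix $z=y_1$ and choose, by Hahn--Banach, $x^*\in S_{X^*}$ with $x^*(z)=1$. Define the interleaved sequence
\[
x_{2n}=y_n,\qquad x_{2n-1}=z_n,
\]
where $(z_n)_n\subset S_X$ is itself maximizing but has $x^*(z_n)$ bounded away from zero. The candidate $z_n=y_n$ may well satisfy $x^*(y_n)\to 0$, so we instead adjust signs and choose $x^*_n$ depending on $n$. The simplest route avoids interleaving altogether: for each $n$ pick $x_n^*\in S_{X^*}$ with $x_n^*(y_n)=1$. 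This still does not yield a single functional, so a cleaner fix is needed.

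\textbf{Cleaner fix.} We show that some maximizing sequence is not weakly null. If a maximizing sequence has a subsequence bounded away from zero under some functional, it already fails weak $q$-summability. Otherwise we perturb: the sequence $w_n=(y_n+\lambda z)/\norma{y_n+\lambda z}$ for small $\lambda$ is only nearly maximizing, so we let $\lambda_n\to0$ slowly. Then $x^*(w_n)\approx x^*(y_n)+\lambda_n$, and we need $\sum_n\lambda_n^q=\infty$, for example $\lambda_n=n^{-1/q}$. With $\lambda_n\to 0$ we have $\norma{y_n+\lambda_n z}\to 1$ and $\norma{T w_n}\to\norma{T}$, so $(w_n)_n$ is maximizing. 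Moreover
\[
\sum_n |x^*(w_n)|^q\gtrsim\sum_n |x^*(y_n)+\lambda_n|^q .
\]
To control the sign of $x^*(y_n)$, replace $y_n$ by $\pm y_n$, which keeps the sequence maximizing, so that $x^*(y_n)\ge 0$. Then $|x^*(y_n)+\lambda_n|\ge\lambda_n$, and the sum diverges. Hence $(w_n)_n$ is not weakly $q$-summable.

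\textbf{Case $T=0$.} Every sequence in $S_X$ is maximizing. Take a constant sequence $x_n=z$ with $\norma{z}=1$; it is not weakly $q$-summable because $x^*(z)=1$ gives $\sum_n|x^*(z)|^q=\infty$.

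The main obstacle is the normalization in the $T\neq0$ case. The estimate $\norma{y_n+\lambda_n z}\ge x^*(y_n)+\lambda_n\ge\lambda_n$ keeps the norm away from zero, and the upper bound $\norma{y_n+\lambda_n z}\le 1+\lambda_n$ gives $x^*(w_n)\ge\lambda_n/(1+\lambda_n)\ge\lambda_n/2$. This is what makes the sum diverge. The argument never uses the hypothesis about $p$, which explains why the property is independent of $p$.
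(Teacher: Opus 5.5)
Your final argument (the ``cleaner fix'') is correct, but it takes a genuinely different route from the paper.

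The paper uses the hypothesis. It picks $x^*$ with $(x^*(x_n))_n\notin\ell_p$, uses only that $x^*(x_n)\neq0$ for infinitely many $n$, and repeats each $x_n$ exactly $m_n$ times with $m_n|x^*(x_n)|^q\geq1$. The new maximizing sequence therefore takes its values in $\{x_n : n\in\N\}$ and satisfies $\sum_k|x^*(z_k)|^q=\infty$.

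You discard the hypothesis altogether. After rotating each $y_n$ by a scalar so that $x^*(y_n)\geq0$, the perturbation $w_n=(y_n+\lambda_n z)/\norma{y_n+\lambda_n z}$ stays maximizing because $\lambda_n\to0$. The estimate $x^*(w_n)\geq\lambda_n/(1+\lambda_n)\geq\lambda_n/2$ then forces divergence as soon as $\sum_n\lambda_n^q=\infty$.

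This buys a strictly stronger conclusion: every $T\in\mathcal L(X;Y)$ with $X\neq\{0\}$ admits, for each $q$, a maximizing sequence that is not weakly $q$-summable. Taking $\lambda_n=1/\log(n+2)$ even gives one sequence that works for all $q$ at once. In particular, the theorem following this proposition becomes immediate: its condition (1) yields (3) directly, without the detour through the $\mathrm{WMP}$, reflexivity, basic subsequences and the Schur property of subspaces of $\ell_1$. The paper's repetition trick is purely combinatorial and keeps both the witnessing functional and the original vectors.

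For a clean write-up:
\begin{itemize}
\item Delete the abandoned interleaving paragraph.
\item Perform the sign normalization before defining $w_n$.
\item Over complex scalars, replace $\pm y_n$ by $\theta_n y_n$ with $|\theta_n|=1$ and $x^*(\theta_n y_n)=|x^*(y_n)|$.
\end{itemize}
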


\begin{proof}
Since $(x_n)_n\notin \ell_p^w(X)$, there exists $x^*\in X^*$ such that $(x^*(x_n))_n\notin \ell_p.$
In particular, the set
$A:=\conj{n \in \N}{x^*(x_n) \neq 0}$
is infinite. Fix $1\leq q<\infty$. We construct the sequence $(m_n)_n \subset \N$ as follows: for each $n\in A$, choose $m_n\in\N$ such that  $m_n |x^*(x_n)|^q\geq 1$. If $n\notin A$, choose $m_n=1$. Define
$$ (z_k)_k   =  (\underbrace{x_1,\ldots,x_1}_{m_1\text{ times}},   \underbrace{x_2,\ldots,x_2}_{m_2\text{ times}},  \ldots,  \underbrace{x_n,\ldots,x_n}_{m_n\text{ times}},    \ldots) \subset S_X.$$
We claim that $(z_k)_k$ is a maximizing sequence for $T$. Indeed, since
$(x_n)_n$ is a maximizing sequence for $T$, given $\varepsilon > 0$, there exists $n_0 \in \N$ such that $\norma{Tx_n} > \norma{T} - \varepsilon$ for every $n \geq n_0$. Taking $k_0 = m_1 + \cdots + m_{n_0 -1} + 1$, by the definition of $(z_k)_k$, we have that for each $k \geq k_0$, $z_k = x_n$ for some $n \geq n_0$. Hence $\norma{Tz_k} > \norma{T} - \varepsilon$ for every $k \geq k_0$, proving that $\displaystyle \lim_{k \to \infty} \norma{Tz_k} = \norma{T}$.
On the other hand,
$$    \sum_{k=1}^{\infty} |x^*(z_k)|^q
    =
    \sum_{n=1}^{\infty} m_n |x^*(x_n)|^q
    \geq
    \sum_{n\in A} 1
    =
    \infty.$$
Thus $(x^*(z_k))_k\notin \ell_q$, and consequently,  $(z_k)_k\notin \ell_q^w(X).$
\end{proof}

Proposition \ref{primeiro} shows that all finite weak $p$-summable versions of the $\mathrm{WMP}$ obtained in this way would coincide. The next result shows that the situation is even more degenerate: this naive formulation is equivalent to requiring that every bounded linear operator from $X$ into $Y$ attain its norm.

\begin{theorem}
Let $X$ and $Y$ be Banach spaces with $Y\neq{0}$. The following assertions are equivalent: \\
{\rm (1)} There exists $1\leq p<\infty$ such that every bounded linear operator $T:X\to Y$ with a maximizing sequence which is not weakly $p$-summable is norm-attaining. \\
{\rm (2)} For every $1\leq p<\infty$, every bounded linear operator $T:X\to Y$ admitting a maximizing sequence which is not weakly $p$-summable is norm-attaining. \\
{\rm (3)} Every bounded linear operator $T:X\to Y$ is norm-attaining.
\end{theorem}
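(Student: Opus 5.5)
The implications (3)$\Rightarrow$(2) and (2)$\Rightarrow$(1) are immediate. Under (3), every operator is norm-attaining, so there is nothing to check. And (2) trivially specializes to any single $p$. The equivalence (1)$\Leftrightarrow$(2) also follows directly from Proposition \ref{primeiro}, although we will not need it. The substance is (1)$\Rightarrow$(3).

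Fix $p$ as in (1) and let $T\in\mathcal L(X;Y)$. If $T=0$, it attains its norm, so assume $T\neq 0$. The plan is to show that \emph{every} nonzero operator admits a maximizing sequence which is not weakly $p$-summable. Hypothesis (1) then forces $T$ to be norm-attaining.

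To build such a sequence, start from any maximizing sequence $(x_n)_n\subset S_X$. The key construction is to pick $x^*\in S_{X^*}$ and fix signs so that $x^*(x_n)$ stays away from zero along a subsequence. Directly this is impossible if $(x_n)_n$ is weakly null, so we will instead modify the sequence using the sign/repetition trick of Proposition \ref{primeiro}. The cleanest route is a simpler observation: repetition already destroys summability as soon as $x^*(x_n)\neq 0$ for infinitely many $n$. Since each $x_n\neq 0$, we would like a single functional $x^*$ with $x^*(x_n)\neq 0$ for infinitely many $n$. This holds automatically. If every $x^*$ vanished on all but finitely many $x_n$, then $(x_n)_n$ would be weakly null with $x^*(x_n)$ eventually zero for each $x^*$. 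We avoid needing this by a direct approach: choose $x_1^*\in S_{X^*}$ with $x_1^*(x_1)=1$. If $x_1^*(x_n)\neq0$ for infinitely many $n$, we are done. Otherwise, pass to the tail where $x_1^*(x_n)=0$. Alternatively, and more simply, replace $(x_n)_n$ by $(\pm x_n)_n$: note $\norma{T(-x_n)}=\norma{Tx_n}$. We then perturb to ensure nonvanishing. The simplest choice is to fix $x_0\in S_X$ with $x^*(x_0)=1$ for some norm-one $x^*$. We then interleave. The sequence $z$ obtained by inserting $x_0$ finitely often is not maximizing unless $\norma{Tx_0}=\norma T$, so interleaving fails. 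Hence we use the following: for each $n$, replace $x_n$ by $y_n=(x_n+t_nx_0)/\norma{x_n+t_nx_0}$ with small $t_n\to0$ chosen so that $x^*(y_n)\neq 0$. This is possible because $t\mapsto x^*(x_n)+t$ vanishes for at most one $t$. Then $\norma{Ty_n}\to\norma T$, so $(y_n)_n$ is maximizing with $x^*(y_n)\neq0$ for all $n$. Now apply the repetition construction from the proof of Proposition \ref{primeiro} with $m_n|x^*(y_n)|^p\ge1$. This yields a maximizing sequence $(z_k)_k\subset S_X$ with $\sum_k|x^*(z_k)|^p=\infty$, so $(z_k)_k\notin\ell_p^w(X)$.

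By (1), $T$ attains its norm, which proves (3). The only delicate step is guaranteeing a functional that is nonzero on infinitely many terms of some maximizing sequence; the perturbation $y_n$ handles this. One must check that $\norma{x_n+t_nx_0}\to1$ and $\norma{T(x_n+t_nx_0)}\to\norma T$ when $t_n\to0$, which is routine. The hypothesis $Y\neq\{0\}$ plays no essential role beyond the statement's framing.
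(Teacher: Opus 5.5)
Your proof is correct, and it takes a genuinely different and much more elementary route than the paper.

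\textbf{The paper's route.} The paper proves (1)$\Rightarrow$(3) by contradiction in several steps:
\begin{enumerate}
\item Hypothesis (1) yields the $\mathrm{WMP}$, hence $X$ is reflexive. This is the only place $Y\neq\{0\}$ is used, via \cite[Corollary 2.5]{arongarciapelteix}.
\item A maximizing sequence of a non-norm-attaining $T$ must then be weakly null, so it has a basic subsequence.
\item That basic subsequence cannot be weakly $1$-summable. Otherwise the reflexive infinite-dimensional space $Z^*$ would embed into $\ell_1$, contradicting the Schur property.
\item Proposition \ref{primeiro} upgrades this to a maximizing sequence that is not weakly $p$-summable.
\end{enumerate}

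\textbf{Your route.} You observe that the premise of (1) is satisfied by \emph{every} operator. First you perturb a maximizing sequence by $t_nx_0$ with $t_n\to 0$. This makes a fixed functional $x^*$ nonzero on every term without destroying the maximizing property. The repetition trick from the proof of Proposition \ref{primeiro} then destroys weak $p$-summability, so (1) collapses immediately to (3).

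\textbf{Comparison.} Your argument avoids reflexivity, basic sequences and the Schur property entirely. It does not need $Y\neq\{0\}$; you tacitly use $X\neq\{0\}$ to pick $x_0$, which is harmless. It also makes the degeneracy of the naive formulation fully transparent. The paper's longer route buys nothing extra here, since reflexivity and the $\mathrm{WMP}$ already follow from (3).

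\textbf{Write-up.} Cut the abandoned attempts from the final version: the tail argument, the sign trick $\pm x_n$, and the interleaving discussion. Your aside that some single $x^*$ is nonzero on infinitely many $x_n$ ``automatically'' is in fact true. By Baire, the closed subspaces $\{x^*\in X^*: x^*(x_n)=0 \text{ for all } n\geq N\}$ cannot cover $X^*$ unless $x_n=0$ eventually. This would be an alternative to the perturbation step. As written, however, you assert it without proof and then do not use it, so either prove it or drop it.
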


\begin{proof}
The implications (3)$\Rightarrow$(2) and (2)$\Rightarrow$(1) are immediate.

Assume now that (1) holds for some $1 \leq p < \infty$. We first observe that it implies that $(X, Y)$ has the $\mathrm{WMP}$. Indeed, if $T \in \mathcal{L}(X; Y)$ admits a non-weakly null maximizing sequence, then this sequence cannot be weakly $p$-summable. Hence, by our assumption, $T$ attains its norm. In particular, as $Y \neq 0$, it follows from \cite[Corollary 2.5]{arongarciapelteix} that $X$ is reflexive. We now prove (3). For the sake of contradiction, we suppose that there exists a non-norm attaining operator $0 \neq T: X \to Y$. Let $(x_n)_n$ be a maximizing sequence for $T$. Since $(X, Y)$ has the $\mathrm{WMP}$, 
$(x_n)_n$ must be weakly null. Thus, it follows from \cite[Proposition 1.5.4]{albiac} that $(x_n)_n$ contains a basic subsequence $(x_{n_k})_k$. We claim that $(x_{n_k})_k$ is not weakly $1$-summable. Indeed, assuming otherwise, the bounded linear operator $$ J: (\overline{[x_{n_k} : k \in \N]})^* \to \ell_1, \qquad J(x^*) = (x^*(x_{n_k}))_k, $$
is well-defined. Moreover, $J$ is bounded by the Closed Graph Theorem, and $J$ is injective since $(x_{n_k})_k$ is a basic sequence. Therefore, $(\overline{[x_{n_k} : k \in \N]})^*$ is isomorphic to a subspace of $\ell_1$. 
However, since $Z = \overline{[x_{n_k} : k \in \N]}$ is a closed subspace of the reflexive space $X$, we have that $Z$ and $Z^*$ are reflexive spaces. This shows that $Z^*$ is an infinite-dimensional reflexive space that is isomorphic to a subspace of $\ell_1$, which is impossible since every subspace of $\ell_1$ has the Schur property. This proves that $(x_{n_k})_k$ is not weakly $1$-summable. 
Finally, recalling that $(x_{n_k})_k$ is also a maximizing sequence for $T$, we get from Proposition \ref{primeiro} that $T$ admits a maximizing sequence which is not weakly $p$-summable, so by the assumption $T$ attains its norm, a contradiction. 
\end{proof}

Thus, simply replacing non-weakly null maximizing sequences by maximizing sequences which are not weakly $p$-summable does not provide a meaningful $p$-summability refinement of the weak maximizing property. Indeed, the preceding result shows that this formulation does not distinguish a genuinely $p$-summability phenomenon from the much stronger requirement that all operators attain their norm. To obtain a nontrivial $p$-version, we require the obstruction to weak $p$-summability to persist along subsequences. We say that a sequence $(x_n)_n$ is weakly $p$-singular if it contains no weakly $p$-summable subsequence. This leads to the following definition.

%%%\corr{errado}{correção}

\begin{definition} Let $1 \leq p < \infty$. A pair of Banach spaces $(X, Y)$ is said to have the \textbf{weakly $p$-singular maximizing property} ($\mathrm{SMP}_p$, for short) if every bounded linear operator $T: X \to Y$ which admits a weakly $p$-singular maximizing sequence is norm-attaining.
\end{definition}

In Section 2 we show that this definition leads to a nontrivial hierarchy of properties. More precisely, $\mathrm{SMP}_p$ implies $\mathrm{SMP}_q$ whenever $p<q$, and the examples studied below show that the converse implications fail in a sharp way. We characterize the values of $r$ for which the pairs $(\ell_p,\ell_q)$ and $(\ell_p,c_0)$ have the $\mathrm{SMP}_r$. In particular, $(L_2[0,1],L_2[0,1])$ has the $\mathrm{SMP}_r$ if and only if $r\geq 2$: the failure for $r<2$ follows from our examples, while the positive part follows from a criterion based on the property strict $(M)$, adapted from ideas of Han and Kim \cite{han}. This criterion also yields further examples involving $\ell_p$-sums of finite dimensional spaces.

 In \cite{dantasjung}, Dantas, Jung and Martínez-Cervantes proved that a Banach space $Y$ has the Schur property if and only if the pair $(X, Y)$ has the $\mathrm{WMP}$ for every reflexive space $X$. When shifting our focus to the weak $p$-summable sequences, it is natural to seek a similar characterization. Recall that a Banach space $X$ is said to have the \textbf{Schur property of order \textit{p}} ($\mathrm{SP}_p$, for short) if every weakly $p$-summable sequence in $X$ is norm null. For recent developments regarding this property, we refer the reader to \cite{alikhani, P-Schur}. In our setting, the reflexivity of the domain space $X$ will be replaced by a natural topological condition: the weak $p$-precompactness of its closed unit ball $B_X$. A subset $K \subseteq X$ is \textbf{relatively weakly \textit{p}-precompact} if every sequence in $K$ admits a weakly $p$-convergent subsequence. The case $p=\infty$ recovers the classical relatively weakly compact sets. For instance, for every $1 < p < \infty$, $B_{\ell_p}$ is relatively weakly $p^*$-precompact, and the closed unit ball of $X_p = \ell_p(\oplus_{i=1}^\infty \ell_1^i)$ is relatively weakly $p^*$-precompact (see \cite{castillo, chenchavez}). It is important to observe that in \cite{castillo}, Castillo and Sánchez referred to this class of sets as relatively weakly $p$-compact sets. However, in order to avoid confusion with the notion of the same name introduced in \cite{sinhakarn}, we will follow the terminology adopted in \cite{chenchavez}. Armed with these notions, Theorem \ref{p-schuriff} characterizes Banach spaces with the $\mathrm{SP}_p$. More precisely, for a given $1<p<\infty$, a Banach space $Y$ has the $\mathrm{SP}_p$ if and only if every bounded linear operator $T:X\to Y$ is norm-attaining whenever $B_X$ is relatively weakly $p$-precompact. The theorem also provides equivalent formulations in terms of the $\mathrm{WMP}$ and the $\mathrm{SMP}_r$ for every $1\leq r<\infty$.

Motivated by the weak$^\ast$ maximizing properties introduced by García-Lirola and Petitjean in \cite{garcia-lirola}, we also investigate the $\mathrm{SMP}_p$ in the setting of adjoint operators. Since weakly $p$-summable and weak$^\ast$ $p$-summable sequences coincide in dual spaces (see, e.g., \cite[p. 2]{fourie}), replacing weakly $p$-singular sequences with weak$^\ast$ $p$-singular sequences would not give rise to a new version of the $\mathrm{SMP}_p$ on pairs of dual spaces. Nevertheless, restricting our attention to operators that are adjoints
gives rise to a formally weaker version of the $\mathrm{SMP}_p$. This leads to the following definition.

\begin{definition}  \label{defsmppadj}
Let $1\leq p<\infty$. A pair of dual Banach spaces $(X^\ast,Y^\ast)$ is said to have the $\mathrm{SMP}_p$ for adjoint operators if every adjoint operator $T^*: X^* \to Y^*$ admitting a weakly $p$-singular maximizing sequence is norm-attaining.
\end{definition}

In Section 3, we relate the $\mathrm{SMP}_p$ for adjoint operators to both the $\mathrm{SMP}_p$ on pairs of dual Banach spaces and the  weak$^*$-to-weak$^*$ maximizing property, and obtain a useful criterion based on the strict $(sM^\ast)$ property introduced in \cite{han}. 
We then establish results which, under natural weak precompactness and Dunford-Pettis-type assumptions, make the $\mathrm{SMP}_p$ of $(X,Y)$ equivalent to either the $\mathrm{SMP}_q$ for adjoint operators or the weak$^\ast$-to-weak$^\ast$ maximizing property of $(Y^\ast,X^\ast)$. These results provide new insight into \cite[Question 5.12]{garcia-lirola}, while our examples clarify the role of the hypotheses involved.

Finally, motivated by results from \cite{arongarciapelteix} and \cite{jung}, we now turn our attention to a perturbation property related to the weakly $p$-singular maximizing property. It follows from \cite{arongarciapelteix} that the classical weak maximizing property implies the compact perturbation property. Following \cite{jung}, a pair of Banach spaces $(X,Y)$ is said to have the \textbf{compact perturbation property} ($\mathrm{CPP}$, for short) if $T+K$ is norm-attaining whenever $T:X\to Y$ is a bounded linear operator and $K:X\to Y$ is a compact operator such that
$\|T\|<\|T+K\|.$

In the present $p$-summability setting, the natural class of perturbations is given by $p$-convergent operators. Recall that a bounded linear operator $K:X\to Y$ is said to be $p$-convergent if it maps weakly $p$-summable sequences in $X$ into norm null sequences in $Y$.

\begin{definition}\label{pcpp}
Let $1\leq p<\infty$. A pair of Banach spaces $(X,Y)$ is said to have the \textbf{$p$-convergent perturbation property} ($p$-$\mathrm{CPP}$, for short) if $T+K$ is norm-attaining whenever $T:X\to Y$ is a bounded linear operator and $K:X\to Y$ is a $p$-convergent operator such that $\|T\|<\|T+K\|.$
\end{definition}

For $p=\infty$, we adopt the convention that $\infty$-$\mathrm{CPP}$ coincides with the classical compact perturbation property. In Section 4, we prove that the $\mathrm{SMP}_p$ implies the $p$-$\mathrm{CPP}$, but the converse implication is not true. We provide a complete characterization when the pairs $(\ell_r, \ell_q)$ and $(\ell_r, c_0)$ enjoy this perturbation property, and we obtain a relatively $p$-precompact version of \cite[Theorem 2.1]{jung}. 

Motivated by the adjoint compact perturbation property introduced by Han and Kim in \cite{han}, we also consider an adjoint version of the $p$-CPP. For $1 \leq p < \infty$, the adjoint $p$-essential norm of $T \in \mathcal{L}(X; Y)$ is defined by
$$ \norma{T}_{e,p}^{\rm adj} := \inf \conj{\norma{T-K}}{K \in \mathcal{L}(X; Y) \text{ and } K^* \text{ is $p$-convergent}}. $$

\begin{definition}
\label{adpcpp}
Let $ 1 \leq p < \infty$. A pair of Banach spaces $(X, Y)$ is said to have the $p$-convergent perturbation property for adjoint operators ($p$-ACPP, for short) if $\norma{T}_{e,p}^{\rm adj} = \norma{T}$ for every $T \in \mathcal{L}(X; Y)$ whose adjoint $T^*$ does not attain its norm. 
\end{definition}

In Section 4, we obtain a pertubative characterization of the $p$-ACPP, relate it to the $p$-CPP and the $\mathrm{SMP}_p$ for adjoint operators, and examine the property for classical sequence spaces.

We refer the reader to \cite{albiac, geraldo, fabianhabala} for any unexplained terminology and standard facts concerning Banach space theory.

\section{The weak $p$-singular maximizing property}

We begin this section providing standard facts about the $\mathrm{SMP}_p$:

\begin{proposition}
\label{prop1}
Let $X$ and $Y$ be Banach spaces such that $(X, Y)$ has the $\mathrm{SMP}_p$ for some $1 \leq p < \infty$. \\
{\rm (1)} If $F$ is a closed subspace of $Y$, then $(X,F)$ has the $\mathrm{SMP}_p$. \\
{\rm (2)} If $E$ is a $1$-complemented subspace of $X$, then $(E,Y)$ has the $\mathrm{SMP}_p$. \\
{\rm (3)} If $q > p$, then $(X, Y)$ has the $\mathrm{SMP}_q$. \\
{\rm (4)} The pair $(X, Y)$ has the $\mathrm{WMP}$.
\end{proposition}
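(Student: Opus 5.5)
The four items are essentially independent, so the plan is to verify each directly from the definition of the $\mathrm{SMP}_p$. The one device used throughout is that the relevant properties of a maximizing sequence transfer between spaces: weak $p$-singularity, maximality, and norm attainment.

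For (1), take $T: X \to F$ with a weakly $p$-singular maximizing sequence $(x_n)_n$. Let $i: F \to Y$ be the inclusion, which is an isometric embedding. Then $\norma{iT} = \norma{T}$ and $\norma{iTx_n} = \norma{Tx_n}$, so $(x_n)_n$ is also a weakly $p$-singular maximizing sequence for $iT$. By hypothesis $iT$ attains its norm at some $x \in S_X$, and then $T$ attains its norm at the same $x$.

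For (2), let $P: X \to E$ be a norm-one projection and take $T: E \to Y$ with a weakly $p$-singular maximizing sequence $(e_n)_n \subset S_E$. Consider $TP: X \to Y$. We have $\norma{TP} \le \norma{T}$, and $TP$ agrees with $T$ on $E$, so $\norma{TP} = \norma{T}$ and $(e_n)_n \subset S_X$ is maximizing for $TP$. The point to check is that $(e_n)_n$ stays weakly $p$-singular when viewed in $X$. Suppose some subsequence were weakly $p$-summable in $X$. Every $e^* \in E^*$ extends by Hahn--Banach to some $x^* \in X^*$, with $e^*(e_{n_k}) = x^*(e_{n_k})$, so the subsequence would be weakly $p$-summable in $E$, a contradiction. (The $1$-complementation is only needed for the norm equality.) Hence $TP$ attains its norm at some $x \in S_X$. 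Then $\norma{T}=\norma{TPx} \le \norma{T}\norma{Px}$, so if $T \neq 0$ we get $\norma{Px} \ge 1$, hence $\norma{Px}=1$ and $T$ attains its norm at $Px$. If $T = 0$ there is nothing to prove.

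For (3), recall the inclusion $\ell_p \subset \ell_q$ for $p<q$, so $\ell_p^w(X) \subset \ell_q^w(X)$. A weakly $q$-singular sequence has no weakly $q$-summable subsequence, hence no weakly $p$-summable one, so it is weakly $p$-singular. An operator admitting a weakly $q$-singular maximizing sequence therefore admits a weakly $p$-singular one and attains its norm by the $\mathrm{SMP}_p$.

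For (4), which I expect to be the only step requiring a small idea, let $T$ have a non-weakly null maximizing sequence $(x_n)_n$. Choose $x^*\in X^*$, $\varepsilon>0$ and a subsequence with $|x^*(x_{n_k})|\ge\varepsilon$ for all $k$. Every further subsequence $(y_j)_j$ of $(x_{n_k})_k$ satisfies $\sum_j |x^*(y_j)|^p = \infty$, so $(x_{n_k})_k$ is weakly $p$-singular. It is still maximizing for $T$, so $T$ attains its norm by the $\mathrm{SMP}_p$. This gives the $\mathrm{WMP}$.
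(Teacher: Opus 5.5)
Your proposal is correct and follows the paper's proof: items (3) and (4) are argued exactly as in the paper (weakly $q$-singular implies weakly $p$-singular because $\ell_p^w(X)\subset\ell_q^w(X)$, and a subsequence bounded away from zero under some $x^*$ has no weakly $p$-summable subsequence). For (1) and (2), which the paper omits as standard and refers to the literature, you give the usual arguments (composition with the inclusion, and composition with a norm-one projection), and you correctly check that weak $p$-singularity transfers from $E$ to $X$ and that $\norma{Px}=1$ when $T\neq 0$.
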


\begin{proof}
    The proofs of items (1) and (2) are standard and will be omitted (see, e.g., \cite[Proposition 2.2]{dantasjung} or \cite[Proposition 3.3]{luizmiranda}). 

    (3) Let $T \in \mathcal{L}(X;Y)$ and $(x_{n})_{n} \subset S_X$ be a weakly $q$-singular maximizing sequence for $T$. Since $q > p$, no subsequence of $(x_n)_n$ can be weakly $p$-summable, which implies that $(x_n)_n$ is weakly $p$-singular. Thus, $T$ attains its norm, proving that $(X, Y)$ has the $\mathrm{SMP}_q$.

    (4) Let  $T \in \mathcal{L}(X;Y)$ and $(x_{n})_{n} \subset S_X$ be a non-weakly null maximizing sequence for $T$. Since $(x_n)_n$ is not weakly null in $X$, there exists $x^* \in X^*$ such that $(x^*(x_n))_n$ does not converge to $0$ in $\K$. So, by passing to a subsequence if necessary, we may assume that $|x^*(x_n)| \geq \varepsilon$ holds for some $\varepsilon > 0$ and every $n \in \N$. Thus, $(x_n)_n$ does not have any weakly null subsequence, and consequently $(x_n)_n$ cannot have any weakly $p$-summable subsequence. Then, $(x_n)_n$ is weakly $p$-singular, so the hypothesis yields that $T$ attains its norm, which proves that $(X, Y)$ has the $\mathrm{WMP}$.
\end{proof}

It follows from Proposition \ref{prop1} that the following chain of implications hold for every pair $(X, Y)$ of Banach spaces and $1 \leq p < q < \infty$:
$$ (X, Y) \text{ has the } \mathrm{SMP}_p \Rightarrow (X, Y) \text{ has the } \mathrm{SMP}_q \Rightarrow (X, Y) \text{ has the } \mathrm{WMP}. $$
Our next result not only shows that the converse implications above are false as it characterizes when the pair $(\ell_p, \ell_q)$ has the $\mathrm{SMP}_r$.

\begin{theorem} \label{teoexlp}
    Let $1 \leq q, r < \infty$ and $1 < p < \infty$ be given. Then, the pair $(\ell_p, \ell_q)$ has the $\mathrm{SMP}_r$ if and only if $q< p$ or $r \geq p^*$.
\end{theorem}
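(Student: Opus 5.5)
Three cases, each resting on known facts about $\ell_p$.

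\emph{Case $q<p$.} By Pitt's theorem every operator $T:\ell_p\to\ell_q$ is compact. Since $\ell_p$ is reflexive, every compact operator on it attains its norm: take a maximizing sequence, pass to a weakly convergent subsequence $x_n\rightharpoonup x$, use $Tx_n\to Tx$ in norm, and note $\norma{x}\le 1$. So $(\ell_p,\ell_q)$ has the $\mathrm{SMP}_r$ for every $r$, indeed vacuously in the strong sense.

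\emph{Case $q\geq p$ and $r\geq p^*$.} Let $(x_n)_n\subset S_{\ell_p}$ be a weakly $r$-singular maximizing sequence for $T$. By Proposition~\ref{prop1}(3) it suffices to treat $r=p^*$. I claim $(x_n)_n$ has no weakly null subsequence. Suppose $(x_{n_k})_k$ is weakly null. By the Bessaga--Pełczyński selection principle, after passing to a further subsequence it is a small perturbation of a normalized block sequence $(u_k)$ in $\ell_p$. Block sequences in $\ell_p$ satisfy an upper $\ell_p$-estimate, so for $y^*\in\ell_{p^*}$ we get $\sum_k|y^*(u_k)|^{p^*}\le C\norma{y^*}^{p^*}$ (they are equivalent to the unit vector basis of $\ell_p$). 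Choosing the perturbations summable, for instance $\sum_k\norma{x_{n_k}-u_k}<\infty$, the subsequence $(x_{n_k})$ is weakly $p^*$-summable. This contradicts weak $p^*$-singularity. Hence $(x_n)_n$ is not weakly null, and the $\mathrm{WMP}$ of $(\ell_p,\ell_q)$ gives norm attainment. This $\mathrm{WMP}$ is the Pellegrino--Teixeira result \cite{danieleduardo} quoted in the introduction, or \cite{arongarciapelteix}. This is exactly the relative weak $p^*$-precompactness of $B_{\ell_p}$ mentioned in the introduction.

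\emph{Case $q\ge p$ and $r<p^*$: failure.} I need a non-norm-attaining $T:\ell_p\to\ell_q$ with a maximizing sequence having no weakly $r$-summable subsequence. The natural candidate is the classical operator $T(x)=\sum_n (1-2^{-n})\,x_n e_n$, i.e.\ a diagonal operator with weights $a_n\uparrow 1$ never attained. It is bounded from $\ell_p$ to $\ell_q$ since $q\ge p$ gives $\norma{\cdot}_q\le\norma{\cdot}_p$. It has norm $1$ and does not attain it: $\norma{Tx}_q\le\norma{Tx}_p<\norma{x}_p$ for $x\ne0$. The sequence $(e_n)$ is maximizing.

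The main point is that no subsequence $(e_{n_k})$ is weakly $r$-summable when $r<p^*$. Take $y^*=\sum_k k^{-1/r}e_{n_k}\in\ell_{p^*}$, which is valid because $p^*/r>1$. Then $\sum_k |y^*(e_{n_k})|^r=\sum_k 1/k=\infty$. So $(e_n)$ is weakly $r$-singular and the $\mathrm{SMP}_r$ fails.

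The expected obstacle is the weak $p^*$-summability claim in the positive direction, namely controlling the perturbation in the Bessaga--Pełczyński step. I would handle it by choosing the subsequence so that $\norma{x_{n_k}-u_k}\le 2^{-k}$. Then $\sum_k|y^*(x_{n_k}-u_k)|^{p^*}<\infty$ trivially, and the triangle inequality in $\ell_{p^*}$ finishes it.
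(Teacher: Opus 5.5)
Your proof is correct and follows the paper's three-case structure. For $q<p$ you use Pitt's theorem together with reflexivity of $\ell_p$. For $q\ge p$ and $r<p^*$ you use a non-norm-attaining diagonal operator whose maximizing sequence $(e_n)_n$ is weakly $r$-singular; your weights $1-2^{-n}$ replace the paper's $n/(n+1)$, and you verify the singularity explicitly where the paper only asserts it. For $r\ge p^*$ you use that a relevant maximizing sequence has a subsequence behaving like the unit vector basis of $\ell_p$, hence weakly $p^*$-summable. The one minor difference is that in this positive case you derive that fact from the $\mathrm{WMP}$ of $(\ell_p,\ell_q)$ plus Bessaga--Pelczynski with a summable block perturbation, exactly as the paper argues in Corollary~\ref{corlp1} and Theorem~\ref{teoc0}, whereas the paper's proof of this theorem cites \cite[Proposition 5]{danieleduardo} directly.
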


\begin{proof}    If $q < p$, then every bounded linear operator from $\ell_p$ into $\ell_q$ is norm-attaining - this follows from Pitt's theorem and \cite[Theorem B]{sheldon}. Thus, $(\ell_p, \ell_q)$ has the $\mathrm{SMP}_r$ for every $r \geq 1$.  
Now, we assume that $r \geq p^*$. For the sake of contradiction, we assume that $(\ell_p, \ell_q)$ does not have the $\mathrm{SMP}_r$. So, there exists a non-norm attaining operator $T: \ell_p \to \ell_q$ and there exists weakly $r$-singular maximizing sequence $(x_n)_n \subset \ell_p$ for $T$. By \cite[Proposition 5]{danieleduardo}, $(x_n)_n$ has a subsequence $(x_{n_k})_k$
that is equivalent to the canonical basis $(e_k)_k$ of $\ell_p$.   Since $(e_k)_k$ is weakly $r$-summable in $\ell_p$ for every $r \geq p^*$, $(x_{n_k})_k$ is also   weakly $r$-summable for every $r \geq p^*$ contradicting the weakly $r$-singularity of $(x_n)_n$. Therefore, $(\ell_p, \ell_q)$ has the $\mathrm{SMP}_r$.

For the converse, we assume that $q \geq p$ and that $r < p^*$, and we prove that $(\ell_p, \ell_q)$ does not have the $\mathrm{SMP}_r$. Indeed, since $q \geq p$, we can consider non-norm attaining operator $T: \ell_{p} \to \ell_{q}$ defined by $\displaystyle T((a_{n})_{n})=\left (\frac{n a_{n}}{n+1}\right )_{n}$ (see \cite[p. 419]{danieleduardo} for the non-norm attainment of $T$). Notice that $(e_n)_n$ is a maximizing sequence for $T$ that is weakly $r$-singular since $r < p^*$.    
\end{proof}

The following is an application of Theorem  \ref{teoexlp} 

\begin{corollary} \label{corlp1} Let $2 \leq  p < \infty$ and $1 < r < \infty$ be given. The pair $(\ell_p, L_p([0,1]))$  has the $\mathrm{SMP}_r$ if and only if $r \geq p^*$.
\end{corollary}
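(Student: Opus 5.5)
We split into the two implications and reduce both to Theorem \ref{teoexlp} using the structure of $L_p([0,1])$ for $2 \leq p < \infty$.

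\emph{Failure for $r < p^*$.} By Khintchine's inequality, the closed span of the Rademacher functions is isomorphic to $\ell_2$. We need instead an \emph{isometric} copy of $\ell_p$ inside $L_p([0,1])$. For this we take normalized indicator functions of disjoint intervals: the functions $f_n = |I_n|^{-1/p}\chi_{I_n}$, with $I_n = (2^{-n}, 2^{-n+1}]$, span an isometric copy of $\ell_p$. So $\ell_p$ is a closed subspace of $L_p([0,1])$.

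Suppose $(\ell_p, L_p([0,1]))$ had the $\mathrm{SMP}_r$. By Proposition \ref{prop1}(1), $(\ell_p,\ell_p)$ would have the $\mathrm{SMP}_r$. Theorem \ref{teoexlp} with $q=p$ then forces $p<p$ or $r\geq p^*$. Hence $r\geq p^*$ is necessary.

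\emph{Sufficiency of $r \geq p^*$.} Let $T:\ell_p\to L_p([0,1])$ be non-norm-attaining and let $(x_n)_n \subset S_{\ell_p}$ be a weakly $r$-singular maximizing sequence. We derive a contradiction by mimicking the proof of Theorem \ref{teoexlp}.

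Since $\ell_p$ is reflexive, we may pass to a weakly convergent subsequence with limit $x$.

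If $x=0$, we pass to a further subsequence that is a small perturbation of a normalized block basis of $(e_k)_k$ in $\ell_p$. Such a sequence is equivalent to $(e_k)_k$, which is weakly $r$-summable for $r\ge p^*$. Perturbations in norm $\sum\varepsilon_k<\infty$ with $\varepsilon_k$ summable preserve weak $r$-summability. This contradicts weak $r$-singularity.

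So it remains to exclude $x\neq 0$, i.e.\ to show that $(\ell_p,L_p([0,1]))$ has the $\mathrm{WMP}$ when $p\ge 2$. Since $p\geq 2$, $L_p$ has cotype $p$ and, restricted to weakly null sequences, satisfies a lower $p$-estimate:
\[
\limsup_n\|f+g_n\|^p\geq \|f\|^p+\limsup_n\|g_n\|^p
\]
up to the relevant convexity constant. In the isometric form, for $p\ge2$ one has $\limsup_n\|f+g_n\|^p\ge\|f\|^p+\limsup_n\|g_n\|^p$ for weakly null $(g_n)_n$ in $L_p$. This holds by the Brezis--Lieb-type inequality when $g_n\to 0$ a.e. For general weakly null sequences, we first use the Kadec--Pełczyński dichotomy to split into an equi-integrable part and a disjointly supported part.

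On the domain side, $\ell_p$ gives $\|x+u_n\|^p\to\|x\|^p+\lim\|u_n\|^p$ with $u_n = x_n - x$. Combining the two and writing $a=\|x\|$, $b = \lim\|u_n\|$, we get
\[
\|T\|^p=\lim\|Tx+Tu_n\|^p\ \ge\ \|Tx\|^p+\|T\|^p\,\limsup\|u_n\|^p \cdot(\text{ratio}),
\]
and more carefully $\|T\|^p\ge \|Tx\|^p/\,a^p\cdot a^p+\|T\|^p(1-a^p)$. This yields $\|Tx\|\ge\|T\|\,\|x\|$, so $T$ attains its norm at $x/\|x\|$, a contradiction. This is the standard argument of Pellegrino--Teixeira and Aron--García--Pellegrino--Teixeira for pairs $(\ell_p,Y)$ where $Y$ satisfies a lower $p$-estimate on weakly null sequences.

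\emph{Main obstacle.} The delicate step is the asymptotic inequality in $L_p$ for general (not a.e.-null) weakly null sequences. The first thing to try is to invoke the known result that $(\ell_p, L_p)$ has the $\mathrm{WMP}$ for $p\ge 2$, which is in the spirit of \cite{arongarciapelteix}, via property $(M)$-type inequalities. Failing that, prove the inequality via the Kadec--Pełczyński subsequence splitting. The hypothesis $p\ge 2$ enters exactly there.
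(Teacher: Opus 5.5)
Your necessity argument and your exclusion of weakly null maximizing sequences are the same as in the paper: an isometric copy of $\ell_p$ in $L_p([0,1])$ together with Proposition~\ref{prop1}(1) and Theorem~\ref{teoexlp}, and the block-basis equivalence with $(e_k)_k$, which is weakly $r$-summable for $r\geq p^*$. The remaining step is the $\mathrm{WMP}$ of $(\ell_p,L_p([0,1]))$ for $p\geq 2$. The paper simply quotes this from \cite[Example 3]{han}; if you cite it too, your proof is the paper's.

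The self-contained argument you give for that step, however, is broken.
\begin{itemize}
\item \emph{The direction is wrong.} A lower estimate $\lim_n\norma{Tx+Tu_n}^p\geq\norma{Tx}^p+\limsup_n\norma{Tu_n}^p$ only gives an \emph{upper} bound for $\norma{Tx}$. Indeed, your line $\norma{T}^p\geq\norma{Tx}^p+\norma{T}^p(1-a^p)$ yields $\norma{Tx}\leq\norma{T}\norma{x}$, the trivial inequality, not the reverse one. Reaching that line would also require $\limsup_n\norma{Tu_n}\geq\norma{T}\limsup_n\norma{u_n}$, which nothing provides. What the argument actually needs is the upper estimate $\lim_n\norma{Tx+Tu_n}^p\leq\norma{Tx}^p+\limsup_n\norma{Tu_n}^p$.
\item \emph{The isometric lower estimate you assert is false for $p>2$.} Work in $L_4([0,1])$ with the Rademacher functions $r_k$, and take $f=1$ and $g_n=1-(1+r_{2n-1})(1+r_{2n})$. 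Each $g_n$ is weakly null, equals $-3$ on a set of measure $1/4$ and $1$ elsewhere, and $\norma{f+g_n}_4^4=16<22=\norma{f}_4^4+\norma{g_n}_4^4$.
\item \emph{The upper estimate also fails for arbitrary weakly null sequences when $p>2$.} Take $f=1$ and $g_n=r_n$; then $\norma{f+g_n}_p^p=2^{p-1}>2$. So no inequality about general weakly null sequences in $L_p$, with or without a Kadec--Pelczynski splitting, can close this step.
\end{itemize}

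The missing idea is to use that $(Tu_n)_n$ is the image of a weakly null sequence of $\ell_p$.
\begin{itemize}
\item For $p>2$, compose $T$ with the inclusion $L_p([0,1])\hookrightarrow L_2([0,1])\cong\ell_2$. This composition is compact by Pitt's theorem, so $\norma{Tu_n}_2\to 0$.
\item Hence $Tu_n\to0$ in measure, and almost everywhere along a subsequence.
\item The Brezis--Lieb lemma then gives $\lim_n\norma{Tx+Tu_n}_p^p=\norma{Tx}_p^p+\lim_n\norma{Tu_n}_p^p\leq \norma{Tx}_p^p+\lim_n\norma{u_n}_p^p$, using $\norma{T}=1$.
\item In $\ell_p$ you have $1=\norma{x}^p+\lim_n\norma{u_n}^p$. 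Combining, $1\leq\norma{Tx}^p+1-\norma{x}^p$, i.e.\ $\norma{Tx}\geq\norma{x}$, so $T$ attains its norm at $x/\norma{x}$.
\item For $p=2$, simply expand $\norma{Tx+Tu_n}^2$ with the inner product.
\end{itemize}
This is where $p\geq 2$ really enters, through Pitt's theorem applied to $\ell_p\to L_2$, not through cotype or a lower estimate.
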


\begin{proof}
(1) Assume first that $(\ell_p, L_p([0,1]))$  has the $\mathrm{SMP}_r$. Since $\ell_p$ embeds isometrically into $L_p([0,1])$ (see \cite[Proposition 6.4.18]{albiac}), we get  from  Proposition \ref{prop1}(1) that $(\ell_p, \ell_p)$ has the $\mathrm{SMP}_r$. By Theorem \ref{teoexlp}, we get that $r \geq p^*$.

Conversely, suppose that $r \geq p^*$. Let $T: \ell_p \to L_p([0,1])$ be a bounded linear operator admitting a weakly $r$-singular maximizing sequence $(x_n)_n \subset \ell_p$. If $(x_n)_n$ were weakly null, then by the Bessaga-Pelczynski selection principle \cite[Proposition 1.3.10]{albiac}, there exists a subsequence $(x_{n_k})_k$ equivalent to a block basic sequence with respect to the canonical basis $(e_k)_k$ of $\ell_p$. In particular, it follows that $(x_{n_k})_k$ is equivalent to $(e_k)_k$ (see, e.g., \cite[p. 426]{danieleduardo}). As $r \geq p^*$, the canonical basis of $\ell_p$ is weakly $r$-summable. Hence $(x_{n_k})_k$ is weakly $r$-summable, contradicting the 
 weak $r$-singularity of $(x_n)_n$. This shows that $(x_n)_n$ cannot be weakly null. Now, since $(\ell_p, L_p([0,1]))$ has the $\mathrm{WMP}$ by \cite[Example 3]{han}, we get that $T$ attains its norm, proving that $(\ell_p, L_p([0,1]))$ has the $\mathrm{SMP}_r$.
    \end{proof}

Since $\ell_2$ is a $1$-complemented subspace of $L_2([0,1])$ (see \cite[Theorem 6.2.13 and Theorem 6.4.2]{albiac} or the proof of \cite[Theorem 3.2]{dantasjung}), we get from Corollary \ref{corlp1} and from Proposition \ref{prop1} that $(L_2([0,1]), L_2([0,1]))$ cannot have the $\mathrm{SMP}_r$ whenever $1 \leq r < 2$, while the same pair has the $\mathrm{WMP}$, since $L_2([0,1])$ is a Hilbert space (see \cite[p. 5]{arongarciapelteix}). 
 %%Dantas, Jung, and Martínez-Cervantes asked whether $(L_p([0,1]), L_q([0,1]))$ has the WMP for $1 \leq p \leq 2 \leq q < \infty$ and $p \neq q$ (see~\cite[Question~4.2]{dantasjung}). Although this question concerns the classical $WMP$, the preceding results show that the stronger properties $SMP_r$ fail in this range whenever $1\leq r<2$: Concerning the singular maximizing property, we have that the pair $(L_2([0,1]), L_2([0,1]))$ fails to have the $SMP_r$ whenever $1 \leq r < 2$ \begin{corollary} \label{corlp}    Let $1 \leq p \leq 2 \leq q < \infty$. If $1\leq r < 2$, then the pair $(L_p([0,1]), L_q([0,1]))$ fails to have the $SMP_r$.\end{corollary} \begin{proof}     For the sake of contradiction, we assume that $(L_p([0,1]), L_q([0,1]))$ has the $SMP_r$. Since $\ell_q$   embeds isometrically into $L_q([0,1])$ (see \cite[Proposition 6.4.18]{albiac}) and \end{proof}

We recall that the pair $(\ell_p, c_0)$ has the $\mathrm{WMP}$ for every $1 < p < \infty$ (see \cite{garcia-lirola}). Our next result characterizes when this pair has the $\mathrm{SMP}_r$.

\begin{theorem} \label{teoc0}
    Let $1 < p < \infty$ and $1 \leq r < \infty$ be given.
    The pair $(\ell_p, c_0)$ has the $\mathrm{SMP}_r$ if and only if $r \geq p^*$.
\end{theorem}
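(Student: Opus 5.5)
The plan mirrors the proof of Theorem \ref{teoexlp}, with one direction for each inequality between $r$ and $p^*$.

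\emph{Necessity ($r<p^*$).} I will exhibit a non-norm-attaining operator with a weakly $r$-singular maximizing sequence. Take the diagonal operator
$$T: \ell_p \to c_0, \qquad T((a_n)_n) = \left(\frac{n a_n}{n+1}\right)_n,$$
which is bounded with $\norma{T}=1$, since $\norma{Tx}_\infty \le \norma{x}_\infty \le \norma{x}_p$.
\begin{itemize}
\item $T$ is not norm-attaining: if $\norma{x}_p=1$, then $\sup_n \frac{n}{n+1}|a_n|<1$. Indeed, $|a_n|\to 0$, so the supremum is achieved at some $n$, and there $\frac{n}{n+1}|a_n|<|a_n|\le 1$.
\item $(e_n)_n$ is maximizing, because $\norma{Te_n}=\frac{n}{n+1}\to 1$.
\item $(e_n)_n$ is weakly $r$-singular: every subsequence of $(e_n)_n$ is again isometrically equivalent to the basis of $\ell_p$, and $(e_n)_n\in\ell_r^w(\ell_p)$ if and only if $\ell_{p^*}\subseteq \ell_r$, i.e. $r\ge p^*$.
\end{itemize}
Hence $(\ell_p,c_0)$ fails the $\mathrm{SMP}_r$ when $r<p^*$.

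\emph{Sufficiency ($r\ge p^*$).} Let $T:\ell_p\to c_0$ have a weakly $r$-singular maximizing sequence $(x_n)_n\subset S_{\ell_p}$. The goal is to show that $(x_n)_n$ cannot be weakly null; once that is established, the $\mathrm{WMP}$ of $(\ell_p,c_0)$ (\cite{garcia-lirola}) gives that $T$ attains its norm.

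Suppose instead that $(x_n)_n$ is weakly null. By the Bessaga--Pe{\l}czy\'nski selection principle, there is a subsequence $(x_{n_k})_k$ equivalent to a block basic sequence $(u_k)_k$ of $(e_n)_n$. Since $\norma{x_n}=1$, we have $\norma{u_k}$ bounded below and bounded above. A seminormalized block basis of $\ell_p$ is equivalent to $(e_k)_k$, as in the proof of Corollary \ref{corlp1} (\cite[p. 426]{danieleduardo}). Therefore $(x_{n_k})_k$ is equivalent to the unit vector basis of $\ell_p$.

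The step that needs care is transferring weak $r$-summability through this equivalence. Equivalence gives an isomorphism $S$ from $\ell_p$ onto $\overline{[x_{n_k}]}\subseteq \ell_p$ with $Se_k=x_{n_k}$. For any $x^*\in\ell_{p^*}$, the restriction of $x^*$ composed with $S$ is a functional on $\ell_p$, so
$$(x^*(x_{n_k}))_k=\bigl((x^*\circ S)(e_k)\bigr)_k\in \ell_{p^*}\subseteq\ell_r.$$
Thus $(x_{n_k})_k$ is weakly $r$-summable, contradicting the weak $r$-singularity of $(x_n)_n$. So $(x_n)_n$ is not weakly null, which finishes the proof.

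The main obstacle is the reduction to the unit vector basis, i.e. guaranteeing that a subsequence of a weakly null normalized sequence in $\ell_p$ is equivalent to $(e_k)_k$. This is classical, and it is exactly what is used in Corollary \ref{corlp1}. Note that, unlike in the $\ell_q$-valued case, no special property of $T$ or of $c_0$ is needed beyond the $\mathrm{WMP}$ of $(\ell_p,c_0)$.
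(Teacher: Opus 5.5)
Your proposal is correct and follows the paper's proof. For $r<p^*$ you use a diagonal non-norm-attaining operator with $(e_n)_n$ as a weakly $r$-singular maximizing sequence; your weights $\frac{n}{n+1}$ in place of the paper's $1-\frac{1}{j}$ are an immaterial change. For $r\ge p^*$ you use the Bessaga--Pe{\l}czy\'nski reduction to the unit vector basis to rule out weakly null maximizing sequences and then invoke the $\mathrm{WMP}$ of $(\ell_p,c_0)$, as the paper does; your explicit transfer of weak $r$-summability through the isomorphism $S$ spells out a step the paper leaves implicit.
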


\begin{proof}
    Assume first that $r < p^*$. Consider the operator  $T: \ell_{p}\to c_{0}$ defined by $$T(a_j)_j=((1-\frac{1}{j})a_{j})_j. $$ It is straightforward to check that $\norma{T} = 1$. We now show that $T$ does not attain its norm. Suppose, for the sake of contradiction, that there exists $a = (a_j)_j \in \ell_p$ such that $\norma{a}_p = 1$ and $\norma{Ta}_\infty = \displaystyle \sup_{j \in \N} |1 - \frac{1}{j}||a_j| = 1$. Since $\displaystyle |1 - \frac{1}{n}||a_n| \to 0$ when $n \to \infty$, exists $N \in \N$ such that $\displaystyle |1 - \frac{1}{n}||a_n| < \frac{1}{2}$ for every $n > N$, which implies that    $$ 1 = \displaystyle \sup_{j \in \N} |1 - \frac{1}{j}||a_j| = \sup_{j = 1,\dots, N} |1 - \frac{1}{j}||a_j|. $$  
      As $\{ |1 - \frac{1}{j}||a_j|: j = 1, \dots, N \}$ is a finite set, we can find $N_0 \in \{1, \dots, N\}$ such that $\displaystyle |1 - \frac{1}{N_0}||a_{N_0}| = 1$. Because $1 - \frac{1}{N_0} < 1$, we obtain $|a_{N_0}| > 1$, which is a contradiction with $\displaystyle \sum_{j=1}^\infty |a_j|^p = 1$. Thus, $T$ does not attain its norm. Notice, however, that the canonical basis  $(e_n)_n$ is a weakly $r$-singular maximizing sequence for $T$ for every $r < p^*$, and so $(\ell_{p},c_{0})$ fails to have $\mathrm{SMP}_r$.

      Conversely, we assume that $r \geq  p^*$.  Let $T: \ell_p \to c_0$ be a bounded linear operator admitting a weakly $r$-singular maximizing sequence $(x_n)_n \subset \ell_p$.
      As in the proof of Corollary \ref{corlp1}, $(x_n)_n$ cannot be weakly null. Now, since $(\ell_p, c_0)$ has the $\mathrm{WMP}$, we get that $T$ attains its norm, proving that $(\ell_p, c_0)$ has the $\mathrm{SMP}_r$.
\end{proof}

We now adapt some ideas from \cite{han} in order to obtain further sufficient conditions ensuring that a pair $(X,Y)$ has the $\mathrm{SMP}_p$. To do this, we recall that a pair of Banach spaces is said to have the {\bf property strict (M)} if for every contraction $T \in \mathcal{L}(X; Y)$, whenever $x\in X$ and $y \in Y$ satisfy $\norma{y} < \norma{x}$, it holds that $$ \limsup_{n} \norma{y + Tx_n} < \limsup_{n} \norma{x+x_n} $$
for every weakly null sequence $(x_n)_n \subset X$.

\begin{theorem} \label{more1}
Let $1 \leq p < \infty$, and let $X$ and $Y$ be Banach spaces such that  $B_X$ is a relatively weakly $p$-precompact set and the pair $(X, Y)$ has the property strict (M). Then, this pair has the $\mathrm{SMP}_p$.
\end{theorem}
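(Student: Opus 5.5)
We argue by contradiction, following the scheme of Han and Kim for the $\mathrm{WMP}$. Assume $T\in\mathcal L(X;Y)$ admits a weakly $p$-singular maximizing sequence $(x_n)_n\subset S_X$. Without loss of generality $\norma{T}=1$, so $T$ is a contraction. Since $B_X$ is relatively weakly $p$-precompact, $(x_n)_n$ has a subsequence, still denoted $(x_n)_n$, which is weakly $p$-convergent to some $x\in X$. This means that $(x_n - x)_n$ is weakly $p$-summable. The limit satisfies $\norma{x}\le 1$ by weak lower semicontinuity, since weak $p$-convergence implies weak convergence.

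The first step is to show that $x\neq 0$. If $x=0$, the subsequence $(x_n)_n$ itself would be weakly $p$-summable, contradicting weak $p$-singularity. So $x\neq 0$, and this is the only place where the singularity hypothesis enters.

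Next set $u_n := x_n - x$, which is weakly null (indeed weakly $p$-summable). We claim that $\norma{Tx}=\norma{x}$. Suppose not. Since $\norma{T}=1$, we would then have $\norma{Tx}<\norma{x}$. Apply property strict (M) with the contraction $T$, the vector $x$, the vector $y:=Tx\in Y$, and the weakly null sequence $(u_n)_n$. This gives
$$\limsup_n \norma{Tx+Tu_n} < \limsup_n \norma{x+u_n}.$$
Here $x+u_n=x_n$, so the right-hand side equals $1$, and the left-hand side is $\limsup_n\norma{Tx_n}=\norma{T}=1$. This yields $1<1$, a contradiction. Hence $\norma{Tx}=\norma{x}=\norma{T}\,\norma{x}$, and since $x\neq 0$, $T$ attains its norm at $x/\norma{x}$. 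This proves that $(X,Y)$ has the $\mathrm{SMP}_p$.

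The main obstacle is matching the formulation of strict (M): it requires a contraction and the strict inequality $\norma{y}<\norma{x}$, both of which we secure above. There is also a minor point in confirming that weak $p$-convergence to $x$ is meant in the sense that $(x_n-x)_n\in\ell_p^w(X)$, which is what guarantees both weak nullity of $(u_n)_n$ and the contradiction when $x=0$.
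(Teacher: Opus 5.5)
Your proposal is correct and follows essentially the same argument as the paper. Both pass to a subsequence with $(x_n-x)_n\in\ell_p^w(X)$, use weak $p$-singularity to get $x\neq 0$, and apply strict (M) with $y=Tx$ and the weakly null sequence $(x_n-x)_n$ to reach the contradiction $1<1$.
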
 

\begin{proof} To prove that $(X, Y)$ has the $\mathrm{SMP}_p$, let $T: X \to Y$ be a bounded linear operator with $\norma{T} = 1$ and let $(x_n)_n$ be a weakly $p$-singular maximizing sequence for $T$. By passing to a subsequence if necessary, we can assume that $(x_n - x)_n \in \ell_p^w(X)$ for some $x \in B_X$.
Since $(x_n)_n$ is weakly $p$-singular, $x \neq 0$. 
If $T$ fails to attain its norm, then $\norma{Tx} < \norma{x}$, and so the assumption implies that     \begin{align*}        1 & = \lim_n \norma{Tx_n} =  \limsup_{n} \norma{Tx + T(x_n-x)} < \limsup_{n} \norma{x+(x_n - x)} = \lim_n \norma{x_n} = 1,    \end{align*}    a contradiction. Thus, $T$ is a norm attaining operator, which proves that $(X, Y)$ has the $\mathrm{SMP}_p$.
\end{proof}

More examples of pairs with the $\mathrm{SMP}_p$ can be obtained with Theorem \ref{more1}.

\begin{examples}\rm {\rm (1)} Let $H$ and $K$ be Hilbert spaces. Then, the pair $(H, K)$ has the property strict (M). Indeed, if $T: H \to K$ is a contraction, $x \in H$ and $y \in K$ satisfy $\norma{y} <\norma{x}$, and $(x_{n})_{n}$ is a weakly null sequence in $H$, then 
\begin{align*}
    \limsup_{n}\norma{y + Tx_{n}}^{2}&=\limsup_{n}\inner{y+ Tx_{n}}{y + Tx_{n}} \\
        &=\limsup \inner{y}{y} + 2Re(\inner{y}{Tx_{n}})+ \inner{Tx_{n}}{Tx_{n}} \\
        &<\limsup_{n} \inner{x}{x}+ 2Re(\inner{y}{Tx_{n}})+ \inner{x_{n}}{x_{n}} \\
        &=\limsup_{n} \norma{x +x_{n}}^{2}.
\end{align*}
Moreover, since every Hilbert space is isometrically isomorphic to some $\ell_2(I)$ for some index set $I$ (see \cite[Exercise 5.8.13]{geraldo}), $B_H$ is relatively weakly $2$-precompact. Then Theorem \ref{more1} yields that $(H,K)$ has the $\mathrm{SMP}_r$ for every $r \geq 2$. Notice that for $H = K = L_2([0,1])$, $r = 2$ is sharp by our commentary after Corollary \ref{corlp1}.\\
{\rm (2)} It follows by Example 2.6(i) and Corollary 3.8(a) from \cite{garcia-lirola} that the pair $(X, c_0)$ has the $\mathrm{WMP}$. Then by \cite[Theorem 13]{han}, $(X, c_0)$ has the property strict (M). If we assume that $B_X$ is relatively weakly $p^*$-precompact, then Theorem \ref{more1} yields that $(X,c_{0})$ has the $\mathrm{SMP}_{p^*}$. In particular, we can consider $X = (\oplus_{n=1}^\infty \ell_1^n)_p$.
\end{examples}

The next result, which is a natural $p$-analogue of \cite[Theorem 3.5]{dantasjung}, presents new classes of pairs with the $\mathrm{SMP}_p$. We emphasize that the exponent $p$ is fixed in the statement below. In particular, although condition (3) concerns the $\mathrm{SMP}_r$ for every $1 \leq r < \infty$, the result characterizes the $\mathrm{SP}_p$ of $Y$ and does not assert that $Y$ has the $\mathrm{SP}_r$ for every $r$.

\begin{theorem}\label{p-schuriff}
For a Banach space $Y$ and $1 < p < \infty$, the following are equivalent: \\
{\rm (1)} $Y$ has the $\mathrm{SP}_p$. \\
{\rm (2)} For every Banach space $X$ such that $B_X$ is relatively weakly $p$-precompact, every bounded linear operator $T: X \to Y$ is norm-attaining. \\
{\rm (3)} For every $1 \leq r < \infty$ and every Banach space $X$ such that $B_X$ is relatively weakly $p$-precompact, the pair $(X, Y)$ has the $\mathrm{SMP}_r$. \\
{\rm (4)} For every Banach space $X$ such that $B_X$ is relatively weakly $p$-precompact, the pair $(X, Y)$ has the $\mathrm{WMP}$.
\end{theorem}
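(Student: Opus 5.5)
We prove the cycle (1)$\Rightarrow$(2)$\Rightarrow$(3)$\Rightarrow$(4)$\Rightarrow$(1). The implications (2)$\Rightarrow$(3) and (3)$\Rightarrow$(4) are essentially free. If every operator $T:X\to Y$ attains its norm, then $(X,Y)$ trivially has the $\mathrm{SMP}_r$ for every $r$. Taking any single $r$ (say $r=1$) in (3), Proposition \ref{prop1}(4) shows that the $\mathrm{SMP}_r$ implies the $\mathrm{WMP}$. So the real content lies in (1)$\Rightarrow$(2) and (4)$\Rightarrow$(1).

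For (1)$\Rightarrow$(2), let $B_X$ be relatively weakly $p$-precompact and $T\in\mathcal L(X;Y)$ with $\norma{T}=1$, and pick a maximizing sequence $(x_n)_n\subset S_X$. By relative weak $p$-precompactness we pass to a subsequence with $(x_n-x)_n\in\ell_p^w(X)$ for some $x\in X$; weak lower semicontinuity gives $\norma{x}\le 1$, since $x_n\to x$ weakly. Because $T$ maps weakly $p$-summable sequences to weakly $p$-summable sequences, $(Tx_n-Tx)_n\in\ell_p^w(Y)$. The $\mathrm{SP}_p$ of $Y$ then gives $\norma{Tx_n-Tx}\to 0$, hence $\norma{Tx}=\lim_n\norma{Tx_n}=1$. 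Together with $\norma{x}\le1$ this forces $\norma{x}=1$, so $T$ attains its norm at $x$.

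For (4)$\Rightarrow$(1), I argue by contraposition. Suppose $(y_n)_n\in\ell_p^w(Y)$ is not norm null. Passing to a subsequence and normalizing, I may assume $\norma{y_n}\to c$ with $\inf_n\norma{y_n}>0$. A weakly $p$-summable sequence is weakly null, so Bessaga--Pe\l czy\'nski lets me further assume $(y_n)_n$ is a seminormalized basic sequence that is still in $\ell_p^w(Y)$. Being weakly $p$-summable, $(y_n)_n$ defines a bounded operator $S:\ell_{p^*}\to Y$, $S(a)=\sum a_ny_n$ (with $c_0$ replacing $\ell_{p^*}$ if $p=1$, but here $p>1$). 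Take $X=\ell_{p^*}$; its unit ball is relatively weakly $p$-precompact, as recalled in the introduction, since $(p^*)^*=p$. Now mimic the construction of \cite[Theorem 3.5]{dantasjung}. Pick $y^*\in S_{Y^*}$ and a unit vector, and define on $X=\mathbb K\oplus_{p^*}\ell_{p^*}$ (still an $\ell_{p^*}$ space) an operator of the form
$$T(\lambda,a)=\lambda y_0+\sum_n \Bigl(1-\tfrac1{n+1}\Bigr)a_n y_n$$
for suitably chosen $y_0$, or more generally weights $\alpha_n\uparrow 1$. The weights are chosen so that $T$ admits the non-weakly null maximizing sequence $(e_0+e_n)/\norma{e_0+e_n}$ but does not attain its norm. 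This contradicts the $\mathrm{WMP}$ required by (4).

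The main obstacle is this last construction: controlling $\norma{T}$ exactly while guaranteeing non-attainment in a general $Y$. The approach I would try first is to renorm the image so that the $y_n$ have equal norm, and to use the lower bound coming from the basis constant. If exact control proves hard, I would instead use the equivalent route of building $T$ whose norm is approached only along $e_0+e_n$, using the strict increase of the weights $\alpha_n$.
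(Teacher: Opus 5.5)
Your implications (1)$\Rightarrow$(2)$\Rightarrow$(3)$\Rightarrow$(4) are fine. For (1)$\Rightarrow$(2) you argue directly along a maximizing sequence, while the paper first shows that every $T:X\to Y$ is compact and then uses reflexivity of $X$; this is the same idea. The real problem is (4)$\Rightarrow$(1). There you do not carry out the construction, and as you have set it up it cannot be carried out.

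\textbf{Why your setup fails.} The domain $\mathbb{K}\oplus_{p^*}\ell_{p^*}$ is isometric to $\ell_{p^*}$, and on that domain the operator you want need not exist.
\begin{itemize}
\item Take $Y=\ell_{p^*}$. Its unit vector basis is normalized and weakly $p$-summable, since for $b\in\ell_p=(\ell_{p^*})^*$ we have $\sum_n|b(e_n)|^p=\norma{b}_p^p$. So $Y$ fails the $\mathrm{SP}_p$.
\item Yet $(\ell_{p^*},\ell_{p^*})$ has the $\mathrm{WMP}$ by Pellegrino--Teixeira \cite{danieleduardo}. Hence every operator $\mathbb{K}\oplus_{p^*}\ell_{p^*}\to\ell_{p^*}$ with a non-weakly null maximizing sequence attains its norm, whatever $y_0$ and weights $\alpha_n$ you choose.
\item Concretely, if $y_0,y_1,y_2,\dots$ is the unit vector basis of $\ell_{p^*}$, your $T$ is a diagonal contraction that attains its norm $1$ at $(1,0)$.
\end{itemize}
The $\ell_{p^*}$-sum is uniformly convex, so vectors such as $(e_0+e_n)/\norma{e_0+e_n}$ give you no room.

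There is a second, independent problem. In a general $Y$ the weighted operator $a\mapsto\sum_n\alpha_na_ny_n$ need not fail to attain its norm. Non-attainment of diagonal operators is a lattice feature of $\ell_q$ or $c_0$; it is not controlled by the basis constant of $(y_n)_n$.

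\textbf{The missing idea.} Use the $\ell_\infty$-sum $X=\ell_{p^*}\oplus_\infty\mathbb{K}$, whose unit ball has the flat face $\{(a,1):\norma{a}\le 1\}$. Suppose $S_0:\ell_{p^*}\to Y$ does not attain its norm, $\norma{S_0}=1$, and $(a_n)_n$ is a maximizing sequence for $S_0$. Set $T(a,\lambda)=S_0a$. Then:
\begin{itemize}
\item $\norma{T}=1$;
\item $((a_n,1))_n\subset S_X$ is maximizing for $T$ and is not weakly null (test against $(a,\lambda)\mapsto\lambda$);
\item $T$ does not attain its norm, because $\norma{S_0a}=1$ with $\norma{a}\le 1$ would force $S_0$ to attain its norm at $a$.
\end{itemize}
This is \cite[Main Theorem (b)]{dantasjung}.

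Two steps then remain, and they are exactly what the paper does.
\begin{enumerate}
\item Produce the non-norm-attaining $S_0$. Your operator $S(a)=\sum_na_ny_n$ is non-compact, since it sends $(e_n)_n$ to a normalized weakly null sequence. For the reflexive domain $\ell_{p^*}$, \cite[Theorem B]{sheldon} then yields a non-norm-attaining operator $\ell_{p^*}\to Y$.
\item Check that $B_{\ell_{p^*}\oplus_\infty\mathbb{K}}$ is relatively weakly $p$-precompact. Take a convergent subsequence in the scalar coordinate, passing further so that the differences are $p$-summable. Take a weakly $p$-convergent subsequence in the $\ell_{p^*}$ coordinate. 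Then test against $(\ell_{p^*}\oplus_\infty\mathbb{K})^*=\ell_p\oplus_1\mathbb{K}$.
\end{enumerate}
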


\begin{proof}(1)$\Rightarrow$(2)  Assume that $Y$ has the $\mathrm{SP}_p$. If $X$ is a Banach space such that $B_X$ is relatively weakly $p$-precompact, then every bounded linear operator $T: X \to Y$ is compact. Indeed, given a sequence $(x_n)_n$ in $B_X$, there exist a subsequence $(x_{n_k})_k$ and $x \in B_X$ such that $(x_{n_k} - x)_k \in \ell_p^w(X)$. As $T$ is bounded and $Y$ has the $\mathrm{SP}_p$, we get that $Tx_{n_k} \to Tx$ in $Y$, proving that $T$ is compact. Now, since $X$ is reflexive and every bounded linear operator from $X$ into $Y$ is compact, we conclude that every bounded linear operator from $X$ into $Y$ is norm-attaining.

(2)$\Rightarrow$(3) follows easily and (3)$\Rightarrow$(4) follows from Proposition \ref{prop1}.

(4)$\Rightarrow$(1) Assume that $Y$ fails to have the $\mathrm{SP}_p$. So, we can take a normalized weakly $p$-summable sequence $(y_n)_n \subset Y$. Notice that the bounded linear operator    $T: \ell_{p^{*}} \to Y$ given by $T((a_{n})_{n})=\displaystyle \sum\limits_{n=1}^{\infty} a_{n}y_{n}$ is a non-compact operator as it maps the canonical basis $(e_n)_n$ of $\ell_{p^*}$ into the normalized sequence $(y_n)_n$. Now, it follows from \cite[Theorem B]{sheldon} that there exists a non-norm attaining operator from $\ell_{p^*}$ into $Y$, and so the pair $(\ell_{p^{*}}\oplus_{\infty}\mathbb{R},Y)$ fails to have $\mathrm{WMP}$ (see \cite[Main Theorem(b)]{dantasjung}). %%By Proposition \ref{prop1}(4), we get that $(\ell_{p^{*}}\oplus_{\infty}\mathbb{R},Y)$ fails to have the $SMP_p$. 
To conclude the proof we check that $B_{\ell_{p^{*}}\oplus_{\infty}\mathbb{R}}$ is a relatively weakly $p-$precompact set. To see this, let $((a_{n})_{n},(\lambda_{n})_{n})\subset B_{\ell_{p^{*}}\oplus_{\infty}\R}$. On the one hand, as $(\lambda_{n})_n \subset[-1,1]$, there exists a convergent subsequence $\lambda_{n_k} \to \lambda$ in $\R$. Thus, we can extract an $p$-summable subsequence of $(\lambda_{n_{k}}-\lambda)_{k}$ that will also be denoted by $(\lambda_{n_{k}}-\lambda)_{k}$. On the other hand, since $B_{\ell_{p^*}}$ is a relatively weakly $p$-precompact and $(a_{n_k})_k \subset B_{\ell_{p^{*}}}$, there exist a weakly $p-$convergent subsequence of $(a_{n_{k}})_k$. For convenience, we denote this subsequence by  $(a_{n_{k}})$ and let $a$ be its weak $p$-limit, that is $(a_{n_k} - a)_k$ is a weakly $p$-summable sequence of $\ell_{p^*}$. We claim that $((a_{n_k}, \lambda_{n_{k}}) - (a, \lambda))_k$ is a weakly $p$-summable sequence.    Indeed, given $\varphi \in (\ell_{p^{*}}\oplus_{\infty} \mathbb{R})^{*} = \ell_{p}\oplus_{1}\mathbb{R}$ (see \cite[Exercise 4.5.13]{geraldo}), we can write $\varphi = (b,\gamma)$ for some $b \in \ell_{p}$ and $\gamma \in \mathbb{R}$. Then \begin{align*}        \left(\sum_{k=1}^{\infty}|\varphi(a_{n_{k}}-a,\lambda_{n_{k}}-\lambda)|^{p}\right)^{\frac{1}{p}}            &=\left(\sum_{k=1}^{\infty}|b(a_{n_{k}}-a) + \gamma(\lambda_{n_{k}}-\lambda)|^{p}\right)^{\frac{1}{p}} \\             &\leq\left( \sum_{k=1}^{\infty}|b(a_{n_{k}}-a)|^{p}\right)^{\frac{1}{p}} + \left(\sum_{k=1}^{\infty}|\gamma(\lambda_{n_{k}}-\lambda)|^{p}\right)^{\frac{1}{p}} \\            &< \infty.    \end{align*}    Therefore, \(((a_{n_k},\lambda_{n_k}) - (a,\lambda))_k\) is weakly \(p\)-summable. 
\end{proof}

We notice that one of the implications in Theorem \ref{p-schuriff} is false for $p =1$. To see this recall from \cite[p. 45]{castillo} that $B_X$ is relatively weakly $1$-precompact if and only if $X$ is finite dimensional. In this case, 
the pair $(X, Y)$ has the $\mathrm{SMP}_1$ for every Banach space $Y$.

\section{The weak $p$-singular maximizing property for adjoint operators}

In \cite{garcia-lirola}, García-Lirola and Petitjean introduced weak$^\ast$ maximizing properties for operators defined on dual spaces. The notion relevant for our purposes is the following: a pair of dual Banach spaces $(X^\ast,Y^\ast)$ has the {\bf weak$^*$-to-weak$^*$ maximizing property} if every adjoint operator $T^*: X^* \to Y^*$ is norm-attaining whenever there exists a non-weak$^*$ null maximizing sequence for $T^*$.

%%We begin this section recalling the two following maximizing-type properties introduced by García-Lirola and Petitjean in \cite{garcia-lirola}:\\ {\rm (i)} The pair $(X^*, Y)$ is said to have the {\bf weak$^*$ maximizing property} ($W^*MP$, in short) if a bounded linear operator $T: X^* \to Y$ is norm-attaining whenever there exists a non-weak$^*$ null maximizing sequence for $T$. \\ {\rm (ii)} The pair $(X^*, Y^*)$ has the {\bf weak$^*$ to weak$^*$ maximizing property} if an adjoint operator $T^*: X^* \to Y^*$ is norm-attaining whenever there exists a non-weak$^*$ null maximizing sequence for $T^*$.

Next, we record some basic relationships between this property, the $\mathrm{SMP}_p$ for adjoint operators introduced in Definition \ref{defsmppadj}, and the usual $\mathrm{SMP}_p$ on pairs of dual spaces.

%%%\medskip

%%%Since weakly $p$-summable and weak$^*$ $p$-summable sequences coincide in dual spaces (see, e.g., \cite[p. 2]{fourie}), the notion of weakly $p$-singular sequence already covers the weak$^*$ setting. Thus, no separate weak$^*$ version of the $SMP_p$ is needed for arbitrary operators on dual spaces. For adjoint operators, however, this restriction leads to a distinct property. Following \cite{garcia-lirola}, we introduce the corresponding adjoint version as follows.

%%\begin{definition} \label{adj1} Let $1 \leq p < \infty$.  A pair of dual Banach spaces $(X^*, Y^*)$ is said to have the \textbf{$SMP_p$ for adjoint operators} if every adjoint operator $T^*:X^*\to Y^*$ admitting a weakly $p$-singular maximizing sequence is norm-attaining. \end{definition} Next proposition lists a few standard facts about this new property: 

\begin{proposition}\label{adj2}
Let $X$ and $Y$ be Banach spaces and let $1\leq p<\infty$. \\
{\rm (1)} If the pair $(X^*,Y^*)$ has the $\mathrm{SMP}_p$, then it has the $\mathrm{SMP}_p$ for adjoint operators. \\
{\rm (2)} If the pair $(X^*,Y^*)$ has the $\mathrm{SMP}_p$ for adjoint operators, then it has the $\mathrm{SMP}_q$ for adjoint operators for every $q>p$.\\
{\rm (3)} If the pair $(X^*,Y^*)$ has the $\mathrm{SMP}_p$ for adjoint operators, then it has the weak$^*$-to-weak$^*$ maximizing property. 
\end{proposition}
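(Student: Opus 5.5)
All three items follow directly from the definitions, in the same way as Proposition \ref{prop1}. We treat them in order.

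For (1), note that every adjoint operator $T^*:X^*\to Y^*$ is in particular a bounded linear operator between the dual spaces. Suppose $T^*$ admits a weakly $p$-singular maximizing sequence and $(X^*,Y^*)$ has the $\mathrm{SMP}_p$. Then $T^*$ attains its norm, so nothing more is needed. The only thing to record is that the definitions match: in Definition \ref{defsmppadj}, "weakly $p$-singular" refers to the weak topology of $X^*$, exactly as in the $\mathrm{SMP}_p$. By the remark recalled from \cite[p. 2]{fourie}, this also coincides with weak$^*$ $p$-singularity.

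For (2), we repeat the argument of Proposition \ref{prop1}(3). Let $T^*$ be an adjoint operator with a weakly $q$-singular maximizing sequence $(x_n^*)_n\subset S_{X^*}$, where $q>p$. Since $\ell_p\subset\ell_q$, every weakly $p$-summable sequence is weakly $q$-summable. Hence no subsequence of $(x_n^*)_n$ is weakly $p$-summable, so $(x_n^*)_n$ is weakly $p$-singular. The hypothesis then gives that $T^*$ attains its norm.

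For (3), we adapt Proposition \ref{prop1}(4) to the weak$^*$ topology. Let $(x_n^*)_n\subset S_{X^*}$ be a maximizing sequence for $T^*$ that is not weak$^*$ null. Then there is $x\in X$ with $(x_n^*(x))_n$ not tending to $0$. Passing to a subsequence, which is still maximizing, we may assume $|x_n^*(x)|\geq\varepsilon>0$ for all $n$. Every further subsequence then fails to be weak$^*$ null. Since $x$ defines an element of $X^{**}$, it also fails to be weakly null, and a fortiori it is not weakly $p$-summable. So the subsequence is a weakly $p$-singular maximizing sequence for $T^*$, and the hypothesis yields norm attainment. This is exactly the weak$^*$-to-weak$^*$ maximizing property.

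No step is a real obstacle. The one point needing care is in (3): we must pass to the subsequence bounded away from zero before invoking singularity, because the original sequence might still contain weakly $p$-summable subsequences.
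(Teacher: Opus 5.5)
Your proposal is correct and follows essentially the same argument as the paper: (1) and (2) are immediate from the definitions and the inclusion $\ell_p\subset\ell_q$, and (3) passes to a subsequence with $|x_n^*(x)|\geq\varepsilon$ for some $x\in X$. The only difference is that in (3) you use just the trivial direction, evaluation at $x\in X\subset X^{**}$, whereas the paper cites the coincidence of weak and weak$^*$ $p$-summability in dual spaces; your version is slightly more economical.
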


\begin{proof}
Items (1) and (2) are immediate from the definitions and from the fact that, if $p<q$, then every weakly $p$-summable sequence is weakly $q$-summable. Hence every weakly $q$-singular sequence is weakly $p$-singular.

(3) Let $T^*:X^*\to Y^*$ be an adjoint operator admitting a non-weak$^*$ null maximizing sequence $(x_n^*)_n$. Passing to a subsequence if necessary, there exist $x\in X$ and $\varepsilon>0$ such that $|x_n^*(x)|\geq \varepsilon$ holds
for every $n\in\N$. Therefore $(x_n^*)_n$ has no weak$^*$ $p$-summable subsequence. Since weakly $p$-summability and weak$^*$ $p$-summability coincide in dual spaces (see \cite[p. 2]{fourie}), $(x_n^*)_n$ is weakly $p$-singular. By the $\mathrm{SMP}_p$ for adjoint operators, $T^*$ attains its norm.
\end{proof}

The converse of Proposition \ref{adj2}(1) holds if we assume that $X$ is reflexive. Indeed, since $X$ is reflexive, every bounded linear operator 
$S: X^* \to Y^*$ is weak$^*$-to-weak$^*$ continuous, and so an adjoint operator. The following example, however, shows that this implication does not hold in general: 

\begin{example} \label{adj3} \rm  Let $1<q<\infty$. The pair $(\ell_1^*,\ell_q^*)$ has the $\mathrm{SMP}_r$ for adjoint operators for every $1\leq r<\infty$, but it fails the $\mathrm{SMP}_r$ for every $1\leq r<\infty$. Indeed, every bounded linear operator from $\ell_q$ into $\ell_1$ is compact by Pitt's theorem, and so all such operators are norm-attaining as $\ell_q$ is reflexive. Consequently, every adjoint operator $T^*: \ell_1^* \to \ell_q^*$ attains its norm (see \cite[Lemma 5.1]{garcia-lirola}).  Thus $(\ell_1^*,\ell_q^*)$ has the $\mathrm{SMP}_r$ for adjoint operators for every $r \geq 1$. On the other hand, 
since $\ell_1^* = \ell_\infty$ is not reflexive, the pair $(\ell_1^*, \ell_q^*)$ cannot have the $\mathrm{SMP}_r$ for any $r \geq 1$.
\end{example}

We can easily notice that the converse implications in items (2) and (3) of Proposition \ref{adj2} are false. Indeed, since $L_2([0,1])$ is reflexive, every bounded linear operator $L_2([0,1]) \to L_2([0,1])$ is an adjoint operator. Thus, the pair $(L_2([0,1]), L_2([0,1]))$ has the 
weak$^*$-to-weak$^*$ maximizing property, but as it was shown in Section 2, this pair has the $\mathrm{SMP}_r$ if and only if $r \geq 2$.

Now, we recall that a pair $(X, Y)$ has the {\bf property strict (sM$^*$)} if for every contraction $T \in \mathcal{L}(X; Y)$, whenever $x^*\in X^*$ and $y^*\in Y^*$ satisfy $\norma{x^*} < \norma{y^*}$, it holds that 
$$\limsup_\alpha \norma{x^*+T^*y_\alpha^*} < \limsup_\alpha \norma{y^*+y_\alpha^*}$$
for every bounded weak$^*$ null net $(y_\alpha^*)_\alpha\subset Y^*$. 

\begin{proposition}\label{more-dual1}
Let $1\leq p<\infty$, and let $X$ and $Y$ be Banach spaces such that $B_{Y^*}$ is a relatively weakly $p$-precompact set and the pair $(X, Y)$ has the property strict (sM$^*$). Then $(Y^*,X^*)$ has the $\mathrm{SMP}_p$ for adjoint operators.
\end{proposition}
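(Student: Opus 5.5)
We will mimic the proof of Theorem \ref{more1}, now working with the adjoint operator and the weak$^*$-null perturbations allowed by property strict (sM$^*$). Let $T\in\mathcal L(X;Y)$ and consider the adjoint $T^*:Y^*\to X^*$. We may assume $\norma{T^*}=\norma{T}=1$, so $T$ is a contraction. Let $(y_n^*)_n\subset S_{Y^*}$ be a weakly $p$-singular maximizing sequence for $T^*$, so that $\norma{T^*y_n^*}\to 1$. We argue by contradiction and suppose that $T^*$ does not attain its norm.

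First, since $B_{Y^*}$ is relatively weakly $p$-precompact, we pass to a subsequence (still denoted $(y_n^*)_n$) and find $y^*\in Y^*$ with $(y_n^*-y^*)_n\in\ell_p^w(Y^*)$. By weak lower semicontinuity of the norm, or weak$^*$ closedness of $B_{Y^*}$, we have $y^*\in B_{Y^*}$. Because $(y_n^*)_n$ is weakly $p$-singular, the subsequence $(y_n^*)_n$ itself is not weakly $p$-summable, which forces $y^*\neq 0$. Next set $x^*:=T^*y^*$. Since $T^*$ is not norm-attaining and $y^*\neq 0$, we get $\norma{x^*}=\norma{T^*(y^*/\norma{y^*})}\norma{y^*}<\norma{y^*}$. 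Put $y_n^{*\prime}:=y_n^*-y^*$. This sequence is weakly $p$-summable, hence weakly null, hence weak$^*$ null; it is also bounded by $2$. Viewed as a net, it is therefore a bounded weak$^*$ null net in $Y^*$.

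Applying property strict (sM$^*$) to the contraction $T$ with the pair $x^*=T^*y^*$, $y^*$, we obtain
\begin{align*}
1=\lim_n\norma{T^*y_n^*}=\limsup_n\norma{T^*y^*+T^*y_n^{*\prime}}<\limsup_n\norma{y^*+y_n^{*\prime}}=\lim_n\norma{y_n^*}=1,
\end{align*}
which is a contradiction. Hence $T^*$ attains its norm, and $(Y^*,X^*)$ has the $\mathrm{SMP}_p$ for adjoint operators.

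The main point needing care is the bookkeeping: the adjoint reverses the order of the pair (the operator is $T^*:Y^*\to X^*$), and strict (sM$^*$) is stated for nets. A sequence is a special case of a net, so the net version applies directly. We also need the normalization $\norma{T}=1$ so that $T$ is a contraction; if $T=0$, then $T^*$ trivially attains its norm. Everything else is routine.
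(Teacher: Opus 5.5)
Your proof is correct and follows essentially the same route as the paper: extract a weakly $p$-convergent subsequence whose limit $y^*$ is nonzero by weak $p$-singularity, use non-attainment to get $\|T^*y^*\|<\|y^*\|$, and apply strict (sM$^*$) to the bounded weak$^*$ null sequence $(y_n^*-y^*)_n$ to reach the contradiction $1<1$. Your bookkeeping is slightly more explicit than the paper's: you write the operator as $T^*$ for a contraction $T\in\mathcal{L}(X;Y)$, and you check that weakly $p$-summable implies bounded and weak$^*$ null.
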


\begin{proof}
 Let $T: Y^{*} \to X^{*}$ be an adjoint operator. Without loss of generality, we may assume that $\norma{T}=1$. Suppose $T$ admits a weakly $p$-singular maximizing sequence $(y_{n}^{*})_{n}$, since $B_{Y^{*}}$ is relatively weakly $p$-precompact and passing to a subsequence if necessary there exists $0 \neq y^{*} \in B_{Y^{*}}$ such that $(y_{n}^{*}-y^{*})_{n} \in \ell_{p}^{w}(Y^{*})$. If $T$ fails to attain its norm, then $\norma{Ty^{*}}<\norma{y^{*}}$. By the assumption, $$1= \lim_{n\to \infty} \norma{Ty_{n}^{*}}=\limsup_{n}\norma{Ty^{*}+T(y_{n}^{*}-y^{*})}<\limsup_{n}\norma{ y^{*}+ (y_{n}^{*}-y^{*})}=1,$$ and therefore $T$ must attain its norm.
\end{proof}

To present examples, we first recall some terminology. The {\bf modulus of asymptotic uniform smoothness} of $X$ is, for each $t > 0$, 
$$ \overline{\rho}_X(t) = \sup_{\norma{x} = 1} \inf_{\dim X/Y < \infty} \sup_{y \in S_Y} \norma{x+ty} - 1, $$
and the {\bf modulus of asymptotic uniform convexity} of $X$ is, for each $t > 0$, 
$$ \overline{\delta}_X  (t) = \inf_{x \in S_X} \sup_{\dim X/Y < \infty} \inf_{y \in S_Y} \norma{x+ty} - 1. $$
If $X$ is a dual space, we can consider different moduli by taking all finite-codimensional
weak$^*$ closed subspaces $Y$ of $X$, and these moduli are denoted by¯$\overline{\rho}_{X}^*$ and $\overline{\delta}_{X^*}^*$, respectively.

\begin{example} \label{more-dual2} \rm 
Considering $Y = (\bigoplus_{i=1}^{\infty}\ell_{\infty}^{i})_{p^*}$, we have $Y^* = (\bigoplus_{i=1}^{\infty}\ell_{1}^{i})_{p}$. Since $Y$ is reflexive, we have from \cite[Example 2.6(i)]{garcia-lirola} that
$ \overline{\delta}_{Y^*}^*(t) = \overline{\delta}_{Y^*}(t) = (1+t^p)^{1/p} - 1 $
for every $t > 0$. 
Now, let $X$ be a Banach space such that $\overline{\rho}^*_{X^*}(t) \leq \overline{\delta}_{Y^*}(t)$ for every $t > 0$. 
In this case, we can apply Theorem 24 and Proposition 21 of \cite{han} to obtain that  $(X, Y)$ has the property strict (sM$^*$). Since $B_{Y^*}$ is relatively weakly $p$-precompact, we obtain by Proposition \ref{more-dual1} that $(Y^*, X^*)$ has the $\mathrm{SMP}_p$ for adjoint operators. 
%%\textcolor{blue}{ {\rm (1) } Let $2\leq p<\infty$ the Banach space $X_{p}^{*}=\left(\bigoplus_{i=1}^{\infty} \ell_{1}^{i}\right)_{p}$ is a dual space with its unit ball relatively weakly $p$-precompact satisfying property (sO$^{*}$), consequently $(X_{p}^{*},X_{p}^{*})$ satisfies (sO$^*$) and also having property (sM)$^{*}$ then it satisfy strict (sM)$^*$ and finally $SMP_{p}$.} \textcolor{red}{Combinar resultados de Garcia-Lirola e Petitjean - Ex 2.6 - com resultados do Han - Teo 24, Prop 21.} \\ \textcolor{blue}{{\rm (2)} Let $X, X_{p}^{*}$ be Banach spaces, being $X$ a dual reflexive space with Schauder basis satisfying an upper $p$-estimate with constant one, then the pair $(X,X_{p}^{*})$ satisfies property strict (sM$^{*}$) and consequently the $SMP_{p}$}
\end{example}

Our final objective in this section is to provide conditions on the Banach spaces $X$ and $Y$ under which every bounded linear operator
$T:X\to Y$ attains its norm if and only if the pair $(Y^\ast,X^\ast)$ has the $\mathrm{SMP}_q$ for adjoint operators. This will, in turn,
yield conditions ensuring that, for given $1\leq p,q<\infty$, the pair $(X,Y)$ has the $\mathrm{SMP}_p$ if and only if $(Y^\ast,X^\ast)$ has the $\mathrm{SMP}_q$ for adjoint operators. The latter equivalence is
closely related to \cite[Question 5.12]{garcia-lirola}. To this end, we will need the following Dunford-Pettis-type properties.

\begin{definition} \label{dppdef} \rm
    Let $1 \leq p, q \leq \infty$ be given. \\
    {\rm (i)} A Banach space $Y$ is said to have the {\bf sequential (p,q)-Dunford-Pettis property} ($\mathrm{sDPP}_{(p,q)}$, for short) if $y_n^*(y_n) \to 0$ for every $(y_n)_n \in \ell_p^w(Y)$ and every $(y_n^*)_n \in \ell_q^w( Y^*)$. For $p = \infty$ (or $q = \infty$), we replace $\ell_p$ (or $\ell_q$) by $c_0$ (see \cite[Definition 1.8]{gab2024}). \\
    {\rm (ii)} A Banach space $Y$ is said to have the {\bf Dunford-Pettis* property of order $p$} ($\mathrm{DP}^*\mathrm{P}_p$, for short) if  $y_n^*(y_n) \to 0$ for every $(y_n)_n \in \ell_p^w(Y)$ and every $(y_n^*)_n \in c_0^{w^*}(Y^*)$. For $p=\infty$, we replace $\ell_p$ by $c_0$ (see \cite[Theorem 2.4]{zeefou14}).
\end{definition}

Before proceeding we list some observations regarding the Dunford-Pettis properties defined in \ref{dppdef}.

\begin{remarks}    \rm 
(1) The $\mathrm{sDPP}_{(\infty, \infty)}$ coincides with the classical Dunford-Pettis property ($\mathrm{DPP}$, for short) \cite[p. 340]{alip}, and the $\mathrm{DP}^*\mathrm{P}_{\infty}$ coincides with the Dunford-Pettis* property ($\mathrm{DP}^*\mathrm{P}$, for short) \cite{cargalou}.\\
(2) If $Y$ is a Grothendieck space, i.e. every weak$^*$ null sequence in $Y^*$ is weakly null, then $Y$ has the $\mathrm{sDPP}_{(p, \infty)}$ if and only if $Y$ has the $\mathrm{DP}^*\mathrm{P}_{p}$. Without assuming that $Y$ is a Grothendieck space, it is well known that $\mathrm{sDPP}_{(\infty, \infty)} = \mathrm{DPP}$ is different than the $\mathrm{DP}^*\mathrm{P}_{\infty} = \mathrm{DP}^*\mathrm{P}$. \\
%%(3) If $Y$ is a dual Banach space with no copy of $\ell_1$, then $Y^*$ has the $\mathrm{sDPP}_{(p, \infty)}$ if and only if $Y$ has the $\mathrm{sDPP}_{(\infty, p)}$ (see \cite[Corollary 3.2]{chenchavez}). \\
(3) We recall from \cite{coarse} that a Banach space $Y$ is said to have the coarse $p-\mathrm{DP}^*$ property if for every relatively weakly compact subset $A \subset Y$ and every bounded linear operator $T: Y \to \ell_p$, it holds that $T(A)$ is relatively compact in $\ell_p$. We claim that every Banach space $Y$ with the coarse $p-\mathrm{DP}^*$ property has the $\mathrm{sDPP}_{(\infty, p)}$. Indeed, if $(y_n)_n$ is a weakly null sequence in $Y$ and $(y_n^*)_n$ is a weakly $p$-summable sequence in $Y^*$, then $A = \conj{y_k}{k \in \N}$ is a relatively weakly compact subset of $Y$ and $T(y):=(y_n^*(y))_n$ defines a bounded linear operator from $Y$ into $\ell_p$. Thus, our assumption yields that $T(A) = \conj{(y_n^*(y_k))_{n}}{k \in \N}$ is a relatively compact subset of $\ell_p$. By \cite[Exercise 15, p. 168]{alip}, we get that $$ s_n:= \sup \conj{\sum_{i=n}^\infty |y_i^*(y_k)|^p}{k \in \N} \to 0$$ when $n \to \infty$. Hence $ \displaystyle  |y_n^*(y_n)|^p \leq \sum_{i=n}^\infty |y_i^*(y_n)|^p \leq s_n \to 0, $ which proves that $Y$ has the $\mathrm{sDPP}_{(\infty, p)}$. \\
(4) A stronger property was defined by Karn and Sinha. Following \cite{karnsinha2}, let $1 \leq p < \infty$, $1 \leq q \leq \infty$, a Banach space $X$ is said to have the $(p,q)$-$\mathrm{DPP}$ if given $(x_n)_n \in \ell_q^w(X)$ and $(x_n^*)_n \in \ell_p^w(X^*)$, it follows that $((x_k^*(x_n))_k)_n\in \ell_p^s(\ell_q)$, i.e.
$$  \sum_{n=1}^\infty  \left ( \sum_{k=1}^\infty |x_k^*(x_n)|^{q} \right)^{p/q} =  \sum_{n=1}^\infty \norma{(x_k^*(x_n))_k}_q^p < \infty. $$
This implies that
$ \displaystyle \lim_{n \to \infty} \sum_{k=1}^\infty |x_k^*(x_n)|^{q} \longrightarrow 0. $
Since, for each $n \in \N$, $|x_n^*(x_n)|^q \leq \sum_{k=1}^\infty |x_k^*(x_n)|^{q}$, we get that
$ \displaystyle \lim_{n \to \infty} |x_n^*(x_n)| = 0, $
proving that $X$ has the $\mathrm{sDPP}_{(q,p)}$. Notice that the indices $p$ and $q$ are inverted in the definitions of $(p,q)$-$\mathrm{DPP}$ and $\mathrm{sDPP}_{(p,q)}$.
\end{remarks}

\begin{lemma}\label{adj4}
Let $X$ be a Banach space such that $B_X$ is relatively weakly $p$-precompact with $1\leq p\leq \infty$, let $Y$ be a Banach space, and let $T:X\to Y$ be a nonzero bounded linear operator.\\
{\rm (1)} If $Y$ has the $\mathrm{sDPP}_{(p,q)}$ with $1\leq q<\infty$, then every maximizing sequence for $T^*:Y^*\to X^*$ is weakly $q$-singular. \\
{\rm (2)} If $Y$ has the $\mathrm{DP}^*\mathrm{P}_p$, then every maximizing sequence for $T^*:Y^*\to X^*$ is non-weak$^*$ null.
\end{lemma}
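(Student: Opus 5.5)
Both parts will be proved by contradiction. In each case we assume a maximizing sequence $(y_n^*)_n\subset S_{Y^*}$ for $T^*$ of the forbidden type, and we pair it against well-chosen norming vectors in $X$.

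Fix such a sequence, so that $\|T^*y_n^*\|\to\|T^*\|=\|T\|>0$. For each $n$ choose $x_n\in B_X$ with $|(T^*y_n^*)(x_n)| = |y_n^*(Tx_n)| \geq \|T^*y_n^*\|-\frac1n$. Then $\liminf_n |y_n^*(Tx_n)|\geq \|T\|>0$, and this persists along every subsequence. Since $B_X$ is relatively weakly $p$-precompact, we may pass to a subsequence (relabelled) and find $x\in X$ with $(x_n-x)_n\in\ell_p^w(X)$; when $p=\infty$ this means $(x_n - x)_n$ is weakly null. Because $T$ is bounded, $(Tx_n-Tx)_n\in\ell_p^w(Y)$, and we write
$$ y_n^*(Tx_n)=y_n^*(Tx_n-Tx)+y_n^*(Tx). $$
The plan is to show that both terms tend to $0$ under the contradiction hypothesis. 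That would give $y_n^*(Tx_n)\to 0$, contradicting $\liminf_n|y_n^*(Tx_n)|\ge\|T\|>0$.

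For (1), suppose $(y_n^*)_n$ is not weakly $q$-singular. Then we first pass to a subsequence $(y_{n_k}^*)_k\in\ell_q^w(Y^*)$, and only afterwards extract the further subsequence along which $(x_{n_k})$ is weakly $p$-convergent; subsequences of weakly $q$-summable sequences remain weakly $q$-summable, so the order of extraction causes no difficulty. The term $y_{n_k}^*(Tx_{n_k}-Tx)\to 0$ follows directly from the $\mathrm{sDPP}_{(p,q)}$ of $Y$, applied to $(Tx_{n_k}-Tx)_k\in\ell_p^w(Y)$ and $(y_{n_k}^*)_k\in\ell_q^w(Y^*)$. The term $y_{n_k}^*(Tx)\to0$ holds because a weakly $q$-summable sequence with $q<\infty$ is weakly null, and hence weak$^*$ null.

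For (2), suppose $(y_n^*)_n$ is weak$^*$ null; it is bounded, so $(y_n^*)_n \in c_0^{w^*}(Y^*)$. The term $y_n^*(Tx)\to 0$ is then immediate. The term $y_n^*(Tx_n-Tx)\to0$ follows from the $\mathrm{DP}^*\mathrm{P}_p$ of $Y$, applied along the subsequence where $(Tx_n-Tx)_n\in\ell_p^w(Y)$.

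The only delicate step is the bookkeeping of subsequences. The norming choice $x_n$ must be made for the original indices, so that the lower bound $\liminf_n |y_n^*(Tx_n)|\ge\|T\|$ survives every extraction, and the precompactness extraction must come after the summability extraction on $(y_n^*)$. The case $p=\infty$ needs the convention that $\ell_\infty^w$ is read as $c_0^w$, which is exactly how Definition \ref{dppdef} reads it, so the argument is unchanged there.
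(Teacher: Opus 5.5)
Your proof is correct and follows essentially the same route as the paper: pair the maximizing functionals with (near-)norming vectors $x_n$, extract a subsequence with $(x_{n_k}-x)_k$ weakly $p$-summable, and use the Dunford--Pettis-type hypothesis to force $y_{n_k}^*(Tx_{n_k}-Tx)\to 0$ while $y_{n_k}^*(Tx)\to 0$, which contradicts $\|T\|>0$. Two of your choices are slightly more careful than the paper's written argument: you use approximate norming vectors instead of invoking reflexivity, and you first extract the weakly $q$-summable subsequence of $(y_n^*)_n$ and only then the precompactness subsequence, whereas the paper extracts the $x$-subsequence first and leaves this ordering implicit.
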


\begin{proof}
Let $(y_n^*)_n \subset Y^*$ be a maximizing sequence for $T^*$. Since $X$ is reflexive, we can find $(x_n)_n \subset S_X$ such that $T^*y_n^*(x_n) = \norma{T^*y_n^*}$ for every $n \in \N$. Besides, since $B_X$ is relatively weakly $p$-precompact, there exists a subsequence $(x_{n_{k}})_k$ and a vector $x \in B_X$ such that $(x_{n_k} - x)_k \in \ell_p^w(X)$ (or  $c_0^w(X)$, if $p = \infty$). The continuity of $T$ implies that $(Tx_{n_k} - Tx)_k \in \ell_p^w(Y)$ (or  $c_0^w(Y)$, if $p = \infty$).

To prove (1), we assume, for the sake of contradiction, that $(y_n^*)_n$ is not weakly $q$-singular. So, passing to a subsequence $(y_{n_{k}})_{k}$ that is weakly $q$-summable. 
As $Y$ has the $\mathrm{sDPP}_{(p,q)}$, it follows that $y_{n_k}^*(Tx_{n_k} - Tx) \to 0$. Thus,
\begin{align*}
    \norma{T^*} & = \lim_{k \to \infty} [\norma{T^*y_{n_k}^*}   - T^*y_{n_k}^*(x)] \\
    & = \lim_{k \to \infty} [T^*y_{k}^*(x_{n_k}) - T^*y_{n_k}^*(x)] = \lim_{k\to \infty} y_{n_k}^*(Tx_{n_k} - Tx) = 0,
\end{align*}
which is a contradiction since $T \neq 0$. Item (2) follows from the same argument.
\end{proof}

\begin{theorem}\label{adj5}
Let $1\leq p\leq\infty$ and $1\leq q<\infty$. Suppose that $B_X$ is relatively weakly $p$-precompact and that $Y$ has the $\mathrm{sDPP}_{(p,q)}$. Then the following assertions are equivalent:\\{\rm (a)} Every adjoint operator $T^*:Y^*\to X^*$ attains its norm.\\
{\rm (b)} Every bounded linear operator $T:X\to Y$ attains its norm.\\
{\rm (c)} The pair $(Y^*,X^*)$ has the $\mathrm{SMP}_q$ for adjoint operators.
\end{theorem}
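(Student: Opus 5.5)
The plan is to prove the cycle (a)$\Rightarrow$(c)$\Rightarrow$(b)$\Rightarrow$(a). Throughout we use that $X$ is reflexive, since $B_X$ is relatively weakly $p$-precompact (every sequence in $B_X$ has a weakly convergent subsequence, so Eberlein--\v{S}mulian applies). The implication (a)$\Rightarrow$(c) is immediate from the definitions: if every adjoint operator $T^*:Y^*\to X^*$ attains its norm, then in particular every adjoint operator admitting a weakly $q$-singular maximizing sequence does.

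For (c)$\Rightarrow$(b), take a bounded linear operator $T:X\to Y$; we may assume $T\neq 0$. Choose any maximizing sequence $(y_n^*)_n\subset S_{Y^*}$ for $T^*$, which exists since $\norma{T^*}=\norma{T}$. By Lemma \ref{adj4}(1), the hypotheses ($B_X$ relatively weakly $p$-precompact and $Y$ with the $\mathrm{sDPP}_{(p,q)}$) force $(y_n^*)_n$ to be weakly $q$-singular. So (c) gives that $T^*$ attains its norm at some $y^*\in S_{Y^*}$. Because $X$ is reflexive, the functional $T^*y^*\in X^*$ attains its norm at some $x\in S_X$. Then
$$\norma{T}=\norma{T^*y^*}=|y^*(Tx)|\leq\norma{Tx}\leq\norma{T},$$
so $T$ attains its norm at $x$.

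For (b)$\Rightarrow$(a), let $T:X\to Y$ attain its norm at $x\in S_X$. By Hahn--Banach there is $y^*\in S_{Y^*}$ with $y^*(Tx)=\norma{Tx}=\norma{T}$. Then
$$\norma{T^*y^*}\geq |T^*y^*(x)|=\norma{T}=\norma{T^*},$$
so $T^*$ attains its norm. Since every adjoint operator $Y^*\to X^*$ has the form $T^*$ for some $T\in\mathcal L(X;Y)$, this gives (a).

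The only delicate step is (c)$\Rightarrow$(b). We must make sure that a maximizing sequence for $T^*$ exists in the form Lemma \ref{adj4} requires, and that the passage from norm attainment of $T^*$ back to $T$ uses reflexivity of $X$ and not of $Y$. Both points are covered by the argument above, and the case $p=\infty$ is already handled inside Lemma \ref{adj4}.
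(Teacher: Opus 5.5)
Your proof is correct and uses essentially the same ingredients as the paper: Lemma \ref{adj4}(1) makes every maximizing sequence for $T^*$ weakly $q$-singular, reflexivity of $X$ passes norm attainment from $T^*$ back to $T$, and Hahn--Banach passes it from $T$ to $T^*$. The only differences are that you run the cycle as (a)$\Rightarrow$(c)$\Rightarrow$(b)$\Rightarrow$(a) instead of the paper's (a)$\Rightarrow$(b)$\Rightarrow$(c)$\Rightarrow$(a), and that you prove inline the two standard facts which the paper simply cites.
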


\begin{proof}
The implication {\rm (a)}$\Rightarrow${\rm (b)} follows from \cite[Proposition 2.1]{mishra} or \cite[Lemma 5.1]{garcia-lirola}. The implication {\rm (b)}$\Rightarrow${\rm (c)} follows from the fact that if $T:X\to Y$ attains its norm, then $T^*:Y^*\to X^*$ also attains its norm; see, e.g., \cite[p. 3]{mishra}.

Finally, assume {\rm (c)} and let $T:X\to Y$ be a nonzero bounded linear operator. By Lemma \ref{adj4}, every maximizing sequence for $T^*$ is weakly $q$-singular. Hence $T^*$ admits a weakly $q$-singular maximizing sequence, and by {\rm (c)} it attains its norm. Therefore every adjoint operator $T^*:Y^*\to X^*$ attains its norm, proving {\rm (a)}.
\end{proof}

Under the assumptions that $B_X$ is relatively weakly $p$-precompact, $Y$ has the $\mathrm{sDPP}_{(p,q)}$ and $B_{Y^*}$ is relatively weakly $q$-precompact, it follows straightforward that every bounded linear operator from $X$ into $Y$ attains its norm. So, we obtain the following:

\begin{corollary}\label{adj6}
Let $1\leq p,q<\infty$. Suppose that $B_X$ is relatively weakly $p$-precompact, that $B_{Y^*}$ is relatively weakly $q$-precompact, and that $Y$ has the $\mathrm{sDPP}_{(p,q)}$. Then, the pair $(X,Y)$ has the $\mathrm{SMP}_p$ if and only if the pair $(Y^*,X^*)$ has the $\mathrm{SMP}_q$ for adjoint operators.
\end{corollary}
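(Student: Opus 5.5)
The plan is to show that, under these hypotheses, both properties are equivalent to the universal norm attainment condition (b) of Theorem \ref{adj5}, namely that every bounded linear operator $T:X\to Y$ attains its norm. Indeed, the remark preceding the corollary says this condition holds automatically under the three assumptions, so I expect both sides of the equivalence to be true. I would prove the remark first and then read off each side.

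\emph{Step 1: every $T\in\mathcal L(X;Y)$ attains its norm.} Take $T\neq 0$. Since $B_X$ is relatively weakly $p$-precompact with $p<\infty$, $X$ is reflexive. Let $(y_n^*)_n\subset S_{Y^*}$ be a maximizing sequence for $T^*$. By Lemma \ref{adj4}(1), applied with $Y$ having the $\mathrm{sDPP}_{(p,q)}$, the sequence $(y_n^*)_n$ is weakly $q$-singular. On the other hand, $B_{Y^*}$ is relatively weakly $q$-precompact, so there are a subsequence $(y^*_{n_k})_k$ and $y^*\in B_{Y^*}$ with $(y^*_{n_k}-y^*)_k\in\ell_q^w(Y^*)$. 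Weak $q$-singularity forces $y^*\neq 0$, but it does not immediately give norm attainment.

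To get norm attainment, I would rerun the argument of Lemma \ref{adj4}. Choose $x_n\in S_X$ with $T^*y_n^*(x_n)=\|T^*y_n^*\|$. Pass to a further subsequence with $(x_{n_k}-x)_k\in\ell_p^w(X)$ for some $x\in B_X$. Then $(Tx_{n_k}-Tx)_k\in\ell_p^w(Y)$ and $(y^*_{n_k}-y^*)_k\in\ell_q^w(Y^*)$, so the $\mathrm{sDPP}_{(p,q)}$ gives $(y^*_{n_k}-y^*)(Tx_{n_k}-Tx)\to 0$. Since $Tx_{n_k}\to Tx$ weakly and $y^*_{n_k}\to y^*$ weakly, the cross terms $y^*(Tx_{n_k}-Tx)$ and $(y^*_{n_k}-y^*)(Tx)$ tend to $0$. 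Expanding the product therefore yields
\[
\|T\|=\lim_k y^*_{n_k}(Tx_{n_k})=y^*(Tx)\le\|Tx\|\le\|T\|,
\]
using $\|y^*\|\le1$ and $\|x\|\le 1$. Hence $\|Tx\|=\|T\|$, so $T$ attains its norm; a fortiori $T^*$ attains its norm at $y^*$.

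\emph{Step 2: both properties hold.} Since every operator $X\to Y$ is norm-attaining, $(X,Y)$ trivially has the $\mathrm{SMP}_p$. Since every adjoint $T^*$ attains its norm as well (or, via Theorem \ref{adj5}, (b)$\Rightarrow$(c)), $(Y^*,X^*)$ has the $\mathrm{SMP}_q$ for adjoint operators. Two true statements are equivalent, which proves the corollary.

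The main obstacle is Step 1. One must ensure the limit identity is legitimate: the pairing should be split exactly as above, and the weak convergences used must be justified. Here weak $p$-summability, respectively weak $q$-summability, implies weak nullity because $p,q<\infty$. If one prefers a less degenerate route, each direction can be argued via Theorem \ref{adj5} (b)$\Leftrightarrow$(c): the $\mathrm{SMP}_q$ for adjoints gives (b), which trivially gives the $\mathrm{SMP}_p$. Conversely, from the $\mathrm{SMP}_p$ one would try to obtain (b) from Step 1 anyway.
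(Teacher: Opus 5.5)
Your proposal is correct and follows the paper's route. The paper derives the corollary from the remark that, under these three hypotheses, every operator $X\to Y$ (hence every adjoint $T^*:Y^*\to X^*$) attains its norm, so both properties hold trivially; your Step 1 supplies the argument the paper calls ``straightforward'' (norming vectors $x_n$, weakly $p$- and $q$-convergent subsequences, and the $\mathrm{sDPP}_{(p,q)}$ giving $\|T\| = y^*(Tx)$). The appeal to Lemma \ref{adj4} is superfluous, since $\|T\| = y^*(Tx)\le \|Tx\|\le\|T\|\,\|x\|$ already forces $y^*\neq 0$ and $\|x\|=1$; state the latter explicitly to conclude that $T$ attains its norm at $x$.
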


%%\begin{proof}We first assume that $(X,Y)$ has the $\mathrm{SMP}_p$. Let $T:X\to Y$ be a nonzero bounded linear operator such that $T^*:Y^*\to X^*$ admits a weakly $q$-singular maximizing sequence $(y_n^*)_n$. Since $X$ is reflexive, we can find a sequence $(x_n)_n \subset S_X$ such that $y_n^*(Tx_n) = \norma{T^*y_n^*}$ for every $n \in \N$. Thus, it follows that $(x_n)_n$ is a maximizing sequence for $T$. Now, since $B_X$ is relatively weakly $p$-precompact, by passing to a subsequence if necessary, we may assume that $(x_n - x)_n$ is a weakly $p$-summable sequence for some $x \in B_X$. On the other hand, as $B_{Y^*}$ is relatively weakly $q$-precompact, passing to a further subsequence, we may also assume that $(y_n^* - y^*)_n$ is weakly $q$-summable for some $y^* \in B_{Y^*}$. As we are assuming that $(y_n^*)_n$ is weakly $q$-singular, $y^* \neq 0$. Now, since $Y$ has the $\mathrm{sDPP}_{(p,q)}$, $(y_n^* - y^*)(Tx_n - Tx) \rightarrow 0$, and so  $y_n^*(Tx_n)\longrightarrow y^*(Tx).$ Hence $$ \norma{T} = \lim_{n \to \infty} \norma{T^*y_n^*} = \lim_{n \to \infty} y_n^*(Tx_n) = y^*(Tx), $$ which implies that $x \neq 0$. Therefore, $(x_n)_n$ is weakly $p$-singular, and our assumption yields that $T$ attains its norm. Consequently, $T^*$ attains its norm, which proves that $(Y^*,X^*)$ has the $\mathrm{SMP}_q$ for adjoint operators. Assuming that $(Y^*, X^*)$ has the $\mathrm{SMP}_q$, we get from Theorem \ref{adj5} that every bounded linear operator from $X$ into $Y$ attains its norm, so $(X, Y)$ has the $\mathrm{SMP}_p$. \end{proof}

The following examples show that the relative weak $p$-precompactness of $B_X$ and the relative weak $q$-precompactness of $B_{Y^*}$ cannot, in general, be omitted from Corollary \ref{adj6}.

\begin{example}\label{contraexadj1} \rm 
(1) Since $c_0$ is not reflexive, the pair $(c_0, \K)$ fails the $\mathrm{SMP}_p$ for every $1\leq p<\infty$. On the other hand, the pair $(\K^*, c_0^*)$ has the $\mathrm{SMP}_q$ for adjoint operators for every $1\leq q<\infty$,
since every operator with finite-dimensional domain attains its norm. Notice that this example does not contradict Corollary \ref{adj6} since $B_X$ is not relatively weakly $p$-precompact for every $1 < p < \infty$. 

(2) Take $X = \ell_p$ with $1 < p < \infty$ and $Y = c_0$. So, $B_{X}$ is relatively weakly $p^*$-precompact, $Y$ has the $\mathrm{sDPP}_{(p^*,q)}$ for every $1 \leq q < \infty$ since $Y^* = \ell_1$ has the Schur property, and $B_{Y^*}$ is not relatively weakly $q$-precompact for every $q$.  It follows from Theorem \ref{teoc0} that  $(X, Y)$ has the $\mathrm{SMP}_{p^*}$. Nevertheless, the pair $(Y^*, X^*)$ fails to have the $\mathrm{SMP}_q$ for adjoint operators. Indeed, considering the non-norm attaining operator  $T: \ell_{p}\to c_{0}$ defined by $\displaystyle T(a_j)_j=((1-\frac{1}{j})a_{j})_j $ from the proof of Theorem \ref{teoc0}, its adjoint $T^*: c_0^* \to \ell_p^*$ does not attain its norm. Since 
$ T^*(b_j)_j = ((1- \frac{1}{j})b_j)_j $
for every $(b_j)_j \in \ell_1 = c_0^*$, we get that $(e_n)_n \subset S_{\ell_1}$ is a maximizing sequence for $T^*$. Since $(e_n)_n$ is weakly $q$-singular in $\ell_1$ for every $1 \leq  q < \infty$, we obtain that $(Y^*, X^*)$ fails to have the $\mathrm{SMP}_q$ for adjoint operators.
\end{example}

The same proof of Theorem \ref{adj5} proves the following:

\begin{theorem} \label{adj7}
    Let $1 \leq p \leq \infty$. If $B_X$ is relatively weakly $p$-precompact and $Y$ has the $\mathrm{DP}^*\mathrm{P}_p$, then the following are equivalent. \\
    {\rm (a)} Every adjoint operator $T^{*}:Y^{*} \to X^{*}$ attains its norm.\\
       {\rm (b)} Every operator $T:X \to Y$ attains its norm.\\
        {\rm (c)} $(Y^{*},X^{*})$ has the weak$^*$-to-weak$^*$ maximizing property.
\end{theorem}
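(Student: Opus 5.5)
The plan follows the cycle (a)$\Rightarrow$(b)$\Rightarrow$(c)$\Rightarrow$(a), exactly as in Theorem \ref{adj5}, using Lemma \ref{adj4}(2) in place of Lemma \ref{adj4}(1).

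For (a)$\Rightarrow$(b), I invoke \cite[Proposition 2.1]{mishra} (equivalently \cite[Lemma 5.1]{garcia-lirola}). Since $B_X$ is relatively weakly $p$-precompact, every sequence in $B_X$ has a weakly convergent subsequence, so $X$ is reflexive. For reflexive $X$, norm attainment of $T^*$ gives norm attainment of $T$. The argument is short: pick $y^*\in S_{Y^*}$ with $\norma{T^*y^*}=\norma{T}$, then pick $x\in S_X$ with $T^*y^*(x)=\norma{T^*y^*}$. Then $\norma{Tx}\geq y^*(Tx)=\norma{T}$.

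For (b)$\Rightarrow$(c), I use that if $T$ attains its norm then so does $T^*$ (see \cite[p. 3]{mishra}). Under (b), every adjoint operator $T^*:Y^*\to X^*$ is norm-attaining, so the weak$^*$-to-weak$^*$ maximizing property holds trivially.

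For (c)$\Rightarrow$(a), let $T:X\to Y$ be nonzero; the case $T=0$ is trivial. By Lemma \ref{adj4}(2), which needs exactly that $B_X$ is relatively weakly $p$-precompact and that $Y$ has the $\mathrm{DP}^*\mathrm{P}_p$, every maximizing sequence for $T^*$ is non-weak$^*$ null. Maximizing sequences for $T^*$ always exist, so $T^*$ admits a non-weak$^*$ null maximizing sequence. By (c), $T^*$ attains its norm.

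The only step needing care is the justification of Lemma \ref{adj4}(2), which the paper settles by saying "the same argument". I would spell it out as follows.
\begin{itemize}
\item Suppose a maximizing sequence $(y_n^*)_n$ for $T^*$ has a weak$^*$ null subsequence. Boundedness puts it in $c_0^{w^*}(Y^*)$.
\item Choose norming vectors $x_n\in S_X$ with $T^*y_n^*(x_n)=\norma{T^*y_n^*}$, which reflexivity allows.
\item Pass to a further subsequence with $(x_{n_k}-x)_k$ weakly $p$-summable (weakly null if $p=\infty$). Then $(Tx_{n_k}-Tx)_k$ is weakly $p$-summable in $Y$.
\item The $\mathrm{DP}^*\mathrm{P}_p$ gives $y_{n_k}^*(Tx_{n_k}-Tx)\to 0$.
\item Weak$^*$ nullity gives $y^*_{n_k}(Tx)\to 0$.
\end{itemize}
Combining the last two facts, $\norma{T}=\lim_k y_{n_k}^*(Tx_{n_k})=0$, which contradicts $T\neq 0$.
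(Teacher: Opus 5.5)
Your proposal is correct and follows the paper's route exactly: the cycle (a)$\Rightarrow$(b)$\Rightarrow$(c)$\Rightarrow$(a) from Theorem~\ref{adj5}, with Lemma~\ref{adj4}(2) in place of Lemma~\ref{adj4}(1). Your explicit verification of Lemma~\ref{adj4}(2) is the ``same argument'' the paper has in mind. It applies the $\mathrm{DP}^*\mathrm{P}_p$ to $(Tx_{n_k}-Tx)_k$ and uses weak$^*$ nullity to get $y_{n_k}^*(Tx)\to 0$.
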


The case $p = \infty$ of Theorem \ref{adj7} coincides with \cite[Proposition 5.10]{garcia-lirola}. For $1 \leq p < \infty$, the class of admissible spaces $X$ becomes more restrictive, whereas the class of spaces $Y$ for which the result holds is enlarged.

Under the assumptions that $B_X$ is relatively weakly $p$-precompact, $Y$ has the $\mathrm{DP}^*\mathrm{P}_p$ , and $B_{Y^*}$ is weak$^{*}$ sequentially compact, we obtain that all bounded linear operators from $X$ into $Y$ attain  their norms. Thus, the following holds:

\begin{corollary}\label{adj8}
Let $1\leq p<\infty$. Suppose that $B_X$ is relatively weakly $p$-precompact and that $Y$ is a Banach space with the $\mathrm{DP}^*\mathrm{P}_p$ such that $B_{Y^*}$ is weak$^{*}$ sequentially compact. Then $(X,Y)$ has the $\mathrm{SMP}_p$ if and only if $(Y^*,X^*)$ has the weak$^*$-to-weak$^*$ maximizing property.
\end{corollary}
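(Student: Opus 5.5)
The plan is to show that, under the stated hypotheses, \emph{both} properties in Corollary \ref{adj8} are equivalent to every bounded linear operator $T:X\to Y$ attaining its norm. The equivalence for the weak$^*$-to-weak$^*$ side is Theorem \ref{adj7}, (b)$\Leftrightarrow$(c). So the remaining task is to show that the $\mathrm{SMP}_p$ of $(X,Y)$ is also equivalent to universal norm attainment.

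One direction is immediate. If every $T\in\mathcal L(X;Y)$ attains its norm, then $(X,Y)$ trivially has the $\mathrm{SMP}_p$. Theorem \ref{adj7} then gives the weak$^*$-to-weak$^*$ maximizing property of $(Y^*,X^*)$.

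For the converse it suffices to prove the claim stated just before the corollary: the hypotheses already force every operator $X\to Y$ to be norm-attaining. I will in fact show that every $T:X\to Y$ is compact.

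1. Note that $X$ is reflexive, since $B_X$ is relatively weakly $p$-precompact and hence relatively weakly compact.
2. Fix $T\in\mathcal L(X;Y)$ and a sequence $(x_n)_n\subset B_X$. By precompactness, pass to a subsequence with $(x_n-x)_n\in\ell_p^w(X)$ for some $x\in B_X$. Then $y_n:=T(x_n-x)$ gives $(y_n)_n\in\ell_p^w(Y)$.
3. Suppose $(Tx_n)_n$ has no norm-convergent subsequence. Passing to a further subsequence, assume $\|y_n\|\geq\varepsilon$ for all $n$, and choose $y_n^*\in S_{Y^*}$ with $y_n^*(y_n)=\|y_n\|$.
4. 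By weak$^*$ sequential compactness of $B_{Y^*}$, extract a subsequence with $y_n^*\to y^*$ weak$^*$. Then $(y_n^*-y^*)_n$ is bounded and weak$^*$ null, and $\mathrm{DP}^*\mathrm{P}_p$ yields $(y_n^*-y^*)(y_n)\to0$.
5. A weakly $p$-summable sequence is weakly null, so $y^*(y_n)\to0$. Combining with step 4 gives $\|y_n\|=y_n^*(y_n)\to0$, contradicting $\|y_n\|\geq\varepsilon$.

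Hence $T$ is compact. A compact operator on a reflexive space attains its norm: take a maximizing sequence, pass to a weakly convergent subsequence, and note its image converges in norm, so the weak limit is a norming point.

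Consequently both sides of the equivalence in Corollary \ref{adj8} hold automatically, and the statement follows.

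The main subtlety is step 4. The $\mathrm{DP}^*\mathrm{P}_p$ applies only to weak$^*$ null sequences of functionals, so the norming functionals $y_n^*$ must be recentred at a weak$^*$ cluster point. This is exactly where weak$^*$ sequential compactness of $B_{Y^*}$ is needed, and $y^*(y_n)\to0$ handles the subtracted term.
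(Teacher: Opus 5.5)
Your proposal is correct and follows the paper's route. The paper likewise asserts, without details, that these hypotheses force every $T\in\mathcal L(X;Y)$ to attain its norm, so both properties hold trivially, with the weak$^*$-to-weak$^*$ side coming from Theorem \ref{adj7}. Your steps 1--5 supply that omitted verification; note that steps 3--5 use only $(y_n)_n\in\ell_p^w(Y)$, so they actually show $Y$ has the $\mathrm{SP}_p$, after which your compactness argument is the one in Theorem \ref{p-schuriff}, (1)$\Rightarrow$(2).
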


The following examples show that the assumptions on $B_X$ and on the $\mathrm{DP}^*\mathrm{P}_p$ in Corollary \ref{adj8} cannot, in general, be omitted.

\begin{examples} \label{adj9}
    \rm (1) Since $c_0$ is not reflexive, the pair $(c_0, \K)$ cannot have the $\mathrm{SMP}_p$ for every $1 \leq p < \infty$. However, $(\K^*, c_0^*)$ has the weak$^*$-to-weak$^*$ maximizing property since $\K^*$ is finite dimensional. Notice that this example does not contradict Corollary \ref{adj8} because $B_{c_0}$ is not relatively weakly $p$-precompact. \\
    (2) It follows from \cite[Remark 3.2]{garcia-lirola} that the pair $(\R \oplus_\infty \ell_2, c_0)$ does not have the $\mathrm{WMP}$, so it cannot have the $\mathrm{SMP}_2$. However, $(c_0^*, (\R \oplus_\infty \ell_2)^*)$ has the weak*-to-weak* maximizing property by \cite[Corollary 5.3]{garcia-lirola}. This example does not contradict Corollary \ref{adj8} since $c_0$ does not have the $DP^*P_2$. Notice, however, that $B_{c_0^*}$ is weak* sequentially compact and that $B_{\R \oplus_\infty \ell_2}$ is relatively weakly $2$-precompact by the same argument presented in the proof of Theorem \ref{p-schuriff}.
\end{examples}

We do not know whether the $\mathrm{sDPP}_{(p,q)}$ assumption in Corollary \ref{adj6} or the weak$^*$ sequential compactness of $B_{Y^*}$ in Corollary \ref{adj8} can be omitted. %%%Notice that a counterexample in either case would provide a negative answer to \cite[Question 5.12]{garcia-lirola}.

\section{The $p$-convergent perturbation property}

We begin this section by proving some basic properties of the $p$-$\mathrm{CPP}$.

\begin{proposition}\label{pcpp1}
Let $1\leq p<\infty$. \\
{\rm (1)} If $(X,Y)$ has the $\mathrm{SMP}_p$, then $(X,Y)$ has the $p$-$\mathrm{CPP}$.\\
{\rm (2)} If $(X, Y)$ has the $p$-$\mathrm{CPP}$, then it has the $q$-$\mathrm{CPP}$ for every $q > p$. \\
{\rm (3)} Suppose that $(X, Y)$ satisfies the $p$-$\mathrm{CPP}$. If $E$ is a $1$-complemented subspace of $X$ and $F$ is a closed subspace of $Y$, then $(E,F)$ has the $p$-$\mathrm{CPP}$. 
\end{proposition}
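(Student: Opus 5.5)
For item (1), I will adapt the argument from \cite{arongarciapelteix} showing that the $\mathrm{WMP}$ implies the $\mathrm{CPP}$. Let $T\in\mathcal L(X;Y)$ and let $K:X\to Y$ be $p$-convergent with $\norma{T}<\norma{T+K}$. Take a maximizing sequence $(x_n)_n\subset S_X$ for $T+K$. I claim it is weakly $p$-singular. Suppose instead that some subsequence $(x_{n_k})_k$ is weakly $p$-summable. Then $\norma{Kx_{n_k}}\to 0$ by $p$-convergence, and so
$$\norma{T+K}=\lim_k\norma{(T+K)x_{n_k}}\le \limsup_k \norma{Tx_{n_k}}+\lim_k\norma{Kx_{n_k}}\le \norma{T},$$
which is a contradiction. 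Hence $(x_n)_n$ is weakly $p$-singular, and the $\mathrm{SMP}_p$ gives that $T+K$ attains its norm. I see no real obstacle in this step.

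For item (2), fix $q>p$. Every weakly $p$-summable sequence is weakly $q$-summable, so a $q$-convergent operator, which sends weakly $q$-summable sequences to null sequences, is in particular $p$-convergent. The class of admissible perturbations therefore shrinks as the index grows. Consequently the $p$-$\mathrm{CPP}$ for $(X,Y)$ applies directly to any $q$-convergent $K$ with $\norma{T}<\norma{T+K}$, and this gives the $q$-$\mathrm{CPP}$.

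For item (3), let $P:X\to E$ be a norm-one projection, let $T:E\to F$ be bounded, and let $K:E\to F$ be $p$-convergent with $\norma{T}<\norma{T+K}$. Let $\iota:F\to Y$ denote the inclusion. I will define $\tilde T=\iota TP$ and $\tilde K=\iota KP$, both in $\mathcal L(X;Y)$.

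The steps are as follows.
\begin{itemize}
\item[(i)] $\tilde K$ is $p$-convergent. Indeed, $P$ maps $\ell_p^w(X)$ into $\ell_p^w(E)$, $K$ then sends these to norm null sequences, and $\iota$ is an isometry.
\item[(ii)] $\norma{\tilde T}=\norma{T}$ and $\norma{\tilde T+\tilde K}=\norma{T+K}$. This holds because $\norma{P}=1$ and $P$ restricts to the identity on $E$, so $B_E\subset P(B_X)\subset B_E$.
\item[(iii)] By the $p$-$\mathrm{CPP}$ for $(X,Y)$, $\tilde T+\tilde K$ attains its norm at some $x\in S_X$.
\item[(iv)] Then $\norma{(T+K)Px}=\norma{T+K}$ with $\norma{Px}\le 1$. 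This forces $Px\neq 0$ and indeed $\norma{Px}=1$, since otherwise normalizing would exceed the norm. Hence $T+K$ attains its norm at $Px\in S_E$.
\end{itemize}

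The only point requiring care in this item is the norm identities in (ii) together with the normalization in (iv). Both are routine.
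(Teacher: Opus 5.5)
Your proposal is correct and follows the paper's proof in all three items. Item (1) is the paper's contradiction argument phrased directly, item (2) uses the same inclusion of perturbation classes, and in item (3) you make explicit the check $\norma{Px}=1$, which the paper leaves implicit.

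The one case the paper covers and you omit is $q=\infty$ in (2), where by convention the $\infty$-$\mathrm{CPP}$ is the $\mathrm{CPP}$. It follows at once: compact operators are completely continuous and weakly $p$-summable sequences are weakly null, so compact operators are $p$-convergent.
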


\begin{proof} (1) Assume for the sake of contradiction that $(X, Y)$ does not have the $p$-$\mathrm{CPP}$. Thus, there exist a bounded linear operator $T: X \to Y$ and a $p$-convergent operator $K : X \to Y$ such that $T+K$ does not attain its norm and $\norma{T} < \norma{T+K}$. If $(x_n)_n$ is a maximizing sequence for $T+K$, as we are assuming that $(X, Y)$ has the $\mathrm{SMP}_p$, $(x_n)_n$ cannot be weakly $p$-singular. So, passing to a subsequence if necessary, we may assume that $(x_n)_n$ is weakly $p$-summable. Since $K$ is $p$-convergent, we have $Kx_n \to 0$, and so
$$ \norma{T+K} = \lim_{n \to \infty} \norma{(T+K)x_n} \leq \norma{T}, $$
a contradiction with $\norma{T} < \norma{T+K}$.

(2) Assume now that $(X, Y)$ satisfies the $p$-$\mathrm{CPP}$ for some $1 \leq p < \infty$ and let  $p < q \leq \infty$. Consider a bounded linear operator $T: X \to Y$ and a $q$-convergent operator (resp. compact operator when $q=\infty$) $K : X \to Y$ such that $\norma{T+K} > \norma{T}$. Since $q > p$, it follows that $K$ is a $p$-convergent operator. Thus, the assumption yields that $T+K$ is a norm attaining operator, proving that $(X, Y)$ has the $q$-$\mathrm{CPP}$. 

(3) Assume that $(X,Y)$ satisfies the $p$-CPP for some $1\leq  p<\infty $. Let $T: E \to F$ be a bounded linear operator and let $K:E \to F$ be a $p$-convergent operator such that $\norma{T}< \norma{T+K}$. Since $E$ is a $1$-complemented subspace of $X$ there exists a projection $P: X \to E$ with $\norma{P}=1$ and let $\iota: F \to Y$ be the inclusion map. Define $\widetilde{T} := \iota \circ T \circ P: X \to Y$ and $\widetilde{K}:= \iota \circ K \circ P : X \to Y$. Let us prove that 
$\norma{T}= \norma{\widetilde{T}}$. Indeed, the definition of the norm yields that
$$\norma{\widetilde{T}}= \sup_{x \in B_{X}}\norma{\widetilde{T}x}\geq \sup_{x \in B_{E}} \norma{\iota \circ T x}= \norma{T}, $$
and the reverse inequality follows from 
$\norma{\widetilde{T}}\leq \norma{\iota}\norma{T} \norma{P}= \norma{T}$ since $\norma{P} = 1 = \norma{\iota}$. The same argument proves that  $\norma{K}= \norma{\widetilde{K}}$ and that $\norma{T+K} = \norma{\widetilde{T} + \widetilde{K}}$. Hence
$\norma{\widetilde{T}}< \norma{\widetilde{T}+ \widetilde{K}}$. Moreover, it is easy to see that $\widetilde{K}$ is a $p$-convergent operator, so our assumption implies that $\widetilde{T}+ \widetilde{K}$ attains its norm at some $x_{0} \in S_{X}$. Therefore
$$\norma{(T+K)(Px_{0})}=\norma{\iota \circ (T+K)Px_{0}}= \norma{(\widetilde{T}+ \widetilde{K})x_{0}}=\norma{\widetilde{T}+ \widetilde{K}}= \norma{T+K},$$
and we are done.
\end{proof}

Now, we present a few examples of pairs enjoying (or failing) the $p$-$\mathrm{CPP}$.

\begin{examples} \label{pcpp2}\rm  Let $1 < p < \infty$ and $1 \leq q, r < \infty$ be given. \\ (1) {\it  The pair $(\ell_p, \ell_q)$ has the $r$-$\mathrm{CPP}$ if and only if $q < p$ or $r \geq p^*$}. If $q < p$ or $r \geq p^*$, then $(\ell_p, \ell_q)$ has the $\mathrm{SMP}_r$ by Theorem \ref{teoexlp}, and so it has the $r$-$\mathrm{CPP}$.
Conversely, assume that $q \geq p$ and that $r < p^*$. Thus, $\ell_p$ has the $\mathrm{SP}_r$, which implies that the operator $T: \ell_p \to \ell_q$ defined by    $$T(x)=\left(\frac{n x_{n}}{2(n+1)}\right)_{n}$$    is $r$-convergent. Nevertheless, it follows that $\norma{T + T} > \norma{T}$ and that $T+ T$ does not attain its norm by the proof of Theorem \ref{teoexlp}. \\
(2) The pair $(\ell_p, c_0)$ has the $r$-$\mathrm{CPP}$ if and only if $r \geq p^*$.  If $r \geq p^*$, it follows from Theorem \ref{teoc0} that $(\ell_p, c_0)$ has the $\mathrm{SMP}_r$, so it has the $r$-$\mathrm{CPP}$. Conversely, assuming that $r < p^*$, we get that $\ell_p$ has the $\mathrm{SP}_r$, so the operator $T: \ell_p \to c_0$ given by 
$$ T(x)=\left(\frac12\left(1-\frac1n\right)a_n\right)_n $$ is $r$-convergent. However, it follows that $\norma{T + T} > \norma{T}$ and that $T+T$ does not attain its norm by the proof of Theorem \ref{teoc0}. \\
{\rm (3)} It follows from \cite[Proposition 3.6]{dantasjung} that there exists a Banach space $E_p$ isomorphic to $\ell_p$ such that the pair $(E_p, c_0)$ does not have the $\mathrm{WMP}$. So, it cannot have the $\mathrm{SMP}_{p^*}$. We claim that $(E_p, c_0)$ has the $p^*$-$\mathrm{CPP}$.
To see this, let $T: E_p \to c_0$ be a bounded linear operator and let $K: E_p \to c_0$ be a $p^*$-convergent operator such that $\norma{T+K} > \norma{T}$. Since $E_p$ is isomorphic to $\ell_p$, $B_{E_p}$ is relatively weakly $p^*$-precompact, $K$ must be compact. However, since $(E_p, c_0)$ has the $\mathrm{CPP}$ by \cite[Proposition 3.6]{dantasjung}, we get that $T+K$ attains its norm. 
\end{examples}

Jung, Martínez-Cervantes and Rueda-Zoca proved that for every Banach space $X$ there exist a Banach space $Y$, a bounded linear operator $T: X \to Y$, and a rank-one operator $R: X\to Y$ such that $\norma{T} < \norma{T+R}$ and $T+R$ fails to attain its norm (see \cite[Theorem 2.1]{jung}). This implies, in particular, that $(X, Y)$ fails to have the $\mathrm{CPP}$. By adapting their argument, we obtain the following.

\begin{theorem} \label{teocpp} Let $ 1 < p < \infty$. If $X$ is infinite-dimensional and $B_X$ is a relatively weakly $p$-precompact set, then there exist a Banach space $Y$ such that $B_{Y}$ is relatively weakly $p$-precompact, a bounded linear operator $T: X \to Y$, and a rank one operator $R: X \to Y$ such that $\norma{T} < \norma{T+R}$ and $T+R$ fails to attain its norm.   
\end{theorem}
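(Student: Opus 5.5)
The plan is to follow the construction of \cite[Theorem 2.1]{jung} and check that every space appearing in it inherits relative weak $p$-precompactness of the unit ball. The key observation is that relative weak $p$-precompactness of the unit ball is an isomorphic invariant. Indeed, weakly $p$-summable sequences are preserved by isomorphisms, and so are bounded sets. The property is also stable under finite direct sums: by the argument in the proof of Theorem \ref{p-schuriff}, $B_{X\oplus_\infty \K}$ is relatively weakly $p$-precompact whenever $B_X$ is. So I will take $Y$ to be $X\oplus\K$ equipped with an equivalent norm $N$ designed so that a suitable $T+R$ fails to attain its norm. The relative weak $p$-precompactness of $B_Y$ is then automatic, and all the work goes into choosing $N$, $T$ and $R$.

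Concretely, $B_X$ is relatively weakly $p$-precompact, hence relatively weakly compact, so $X$ is reflexive and infinite-dimensional. Since every norm-one functional on $X$ attains its norm, the failure of norm attainment has to be manufactured by the target norm. I will fix a normalized weakly null sequence $(x_n)_n\subset S_X$ (Josefson--Nissenzweig/reflexivity plus Bessaga--Pe{\l}czy\'nski) and norm-one functionals $x_n^*$ with $x_n^*(x_n)=1$. I will also fix a norm-one $x^*\in S_{X^*}$. On $X\oplus\K$ I define
$$N(x,\lambda)=\max\Big\{\norma{x},\ \sup_{n}\Big(1-\tfrac1n\Big)\big|x_n^*(x)\big|+\tfrac12|\lambda|\Big\}$$
(or a close variant), which is clearly equivalent to $\norma{x}+|\lambda|$. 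Then I put $T(x)=(x,0)$, so that $\norma{T}=1$, and $R(x)=(0,x^*(x))$, which has rank one. The sequence $(x_n)_n$, or small perturbations $x_n+\delta z$ with $x^*(z)\neq0$, should witness $\norma{T+R}>1$. The coefficients $(1-\frac1n)$ are inserted so that the supremum defining $N((T+R)x)$ can only be approached along $n\to\infty$, and never achieved.

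The remaining steps, in order, are as follows. First, compute $\norma{T+R}$ and show it is strictly larger than $1$. Second, show that for each $x\in S_X$ the value $N(x,x^*(x))$ is strictly less than $\norma{T+R}$. For the second step I plan to argue as in the proof of Theorem \ref{teoc0}. Since $(x_n)_n$ is weakly null and $X$ is reflexive, after passing to a subsequence we may assume that $(x_n^*)_n$ is weak$^*$ (hence weakly) null. Then for fixed $x$ the terms $|x_n^*(x)|$ tend to $0$, so the supremum over $n$ is a maximum over finitely many indices. For each such index the factor $1-\frac1n<1$ produces a strict inequality.

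I expect the main obstacle to be tuning $N$ so that both requirements hold at once: $\norma{T+R}>\norma{T}$ must be attained only asymptotically, while pointwise non-attainment must hold. If the simple formula above fails, my fallback is to reproduce verbatim the norm used in \cite[Theorem 2.1]{jung}. That construction works for an arbitrary $X$, and its target space is built from $X$ and finitely many scalar coordinates, so it is isomorphic to $X\oplus\K^m$. By the invariance remarks in the first paragraph, that space has a relatively weakly $p$-precompact unit ball, which is the only new ingredient the theorem requires.
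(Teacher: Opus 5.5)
\textbf{There is a genuine gap: the proposal never produces $T$, $R$, $Y$ with the required properties.} Your invariance remarks are correct. Relative weak $p$-precompactness of the ball passes to isomorphic spaces and to finite sums, and this is exactly what the paper checks by hand for its target space. The trouble is the operators. For your norm $N$ one has $\norma{T+R}=\max\{1,\sup_n c_n\}$, where $c_n=\max_{|\theta|=1}\norma{(1-\frac1n)x_n^*+\frac{\theta}{2}x^*}$, and each $c_n$ is attained on $S_X$ because $X$ is reflexive. So $T+R$ fails to attain its norm only if $\sup_n c_n>1$ and this supremum is not a maximum. The factor $1-\frac1n$ does not ensure this. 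In Theorem \ref{teoc0} it works because one compares with the unweighted bound $|a_j|\le 1$; here the weights already sit inside $c_n$.

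Both of your ``remaining steps'' actually fail for admissible choices:
\begin{itemize}
\item In $X=\K\oplus_1\ell_2$, take $x_n=x_n^*=(0,e_n)$ and $x^*=(1,0)$. Then $N((T+R)x)=\norma{x}$ for every $x$, so $\norma{T+R}=\norma{T}$, and the perturbations $x_n+\delta z$ do not help.
\item In $X=\ell_2$, take $x_n=x_n^*=e_n$ and $x^*=e_m$ with $m\geq 3$. Then $c_m=\frac32-\frac1m\geq\frac76>\frac{\sqrt5}{2}>c_n$ for all $n\neq m$, so $T+R$ attains its norm at $e_m$.
\end{itemize}
Whether your construction works therefore depends on the choice of $x^*$ relative to $(x_n^*)_n$ and on the geometry of $X$, and the proposal controls neither.

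\textbf{The fallback does not close the gap.} As the paper uses it, \cite[Theorem 2.1]{jung} gives $T,K\colon X\to\ell_\infty$, and $B_{\ell_\infty}$ is not even relatively weakly compact. So the claim that the target is ``$X$ plus finitely many scalar coordinates'' cannot simply be imported.

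\textbf{The missing idea is the graph trick} from the proof of \cite[Theorem 1.1]{jung}, which the paper uses:
\begin{itemize}
\item Start from $\norma{T}=1<\norma{T+K}$ with $T+K$ non-norm-attaining.
\item Define $\widetilde T x=(Tx,x)$ and $\widetilde K x=(Kx,0)$ in $\ell_\infty\oplus_\infty X$, and set $Y=\widetilde T(X)+\widetilde K(X)$.
\item Since $\norma{T}=1$, $\widetilde T$ is an isometry. Hence $Y$ is closed and isomorphic to $X$ or to $X\oplus\K$, and $\widetilde K$ has rank one.
\item From $\norma{(\widetilde T+\widetilde K)x}=\max\{\norma{(T+K)x},\norma{x}\}$ you get $\norma{\widetilde T+\widetilde K}=\norma{T+K}>1=\norma{\widetilde T}$.
\item Non-attainment is preserved, because on $B_X$ the second coordinate never exceeds $1<\norma{T+K}$.
\end{itemize}
With this step supplied, your invariance observations finish the proof, and the argument becomes essentially the paper's. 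The paper verifies the precompactness of $B_Y$ directly, writing $y_n=\widetilde T u_n+\widetilde K v_n$ with $\norma{u_n}\le\norma{y_n}$.
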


\begin{proof} Since $X$ is  reflexive, there are a bounded linear operator $T: X \to \ell_\infty$  and a rank-one operator $K: X \to \ell_{\infty}$ such that $\norma{T+K} > \norma{T} = 1$ and $T+K$ is not a norm-attaining operator (see \cite[Theorem 2.1]{jung}). Proceeding as in the proof of \cite[Theorem 1.1]{jung}, there exists a reflexive Banach space $Z = \widetilde{T}(X) \oplus \widetilde{K}(X)$, where  $\widetilde{T}, \, \widetilde{K} \in \mathcal{L}(X; \ell_{\infty} \oplus_{\infty}X)$ are given by 
$$\widetilde{T}(x)=(T(x),x) \quad \text{ and } \quad \widetilde{K}(x)=(K(x),0). $$
Thus, we have that $\widetilde{T}: X \to Z$ is a bounded a linear operator and $\widetilde{K}: X \to Z$ is a rank-one operator such that $\norma{\widetilde{T}} < \norma{\widetilde{T}+ \widetilde{K}}$ and $\widetilde{T}+ \widetilde{K}$ fails to attain its norm. 

To conclude our proof it is enough to show that $B_Z$ is relatively weakly $p$-precompact. Indeed, 
given $(y_{n})_{n} \subset B_{Z}$, there are $(u_n)_n, \, (v_n)_n \subset X$ such that 
$$y_{n}= \widetilde{T}(u_{n}) + \widetilde{K}(v_{n}) = (Tu_n + Kv_n, u_n)$$ for every $n \in \N$. On the one hand, for each $n \in \N$
$$ \norma{u_n}  \leq \max \{ \norma{Tu_n + Kv_n}, \, \norma{u_n} \} =  \norma{y_n} \leq 1, $$
so $(u_n)_n$ is contained in $B_X$ which is assumed to be relatively weakly $p$-precompact. Thus, there exist a subsequence $(u_{n_k})_k$ of $(u_n)_n$ and $u \in B_X$ such that $(u_{n_k}-u)_k$ is weakly $p$-summable in $X$. Therefore $(\widetilde{T}(u_{n_{k}}-u))_{k} $ is a weakly $p$- summable subsequence. On the other hand, since $(\widetilde{K}(v_{n_k}))_k$ is a bounded sequence in the one-dimensional space $\widetilde{K}(X)$, we can extract a weakly $p$-convergent subsequence of $(\widetilde{K}(v_{n_k}))_k$. Without loss of generality, we say that $(\widetilde{K}(v_{n_k}) - \widetilde{K}(v))_k$ is a weakly $p$-summable sequence.
 Now, setting $y= \widetilde{T}(u) + \widetilde{K}(v)$, we get that $(y_{n_k} - y)_k$ is a weakly $p$-summable sequence.
\end{proof}

Theorem \ref{teocpp} shows that the $p$-$\mathrm{CPP}$ may fail even when both $B_X$ and $B_Y$ are relatively weakly $p$-precompact, and that this failure can already be witnessed by a rank-one perturbation.

We now turn to the adjoint version of the $p$-$\mathrm{CPP}$ introduced in Definition \ref{adpcpp}. To clarify its connection with the classical setting, recall that Han and Kim \cite{han} defined the adjoint compact perturbation property (ACPP) by requiring $\norma{T} = \norma{T}_e := \displaystyle \inf_{K \in \mathcal{K}(X; Y)} \norma{T-K}$ for every $T \in \mathcal{L}(X; Y)$ whose adjoint $T^*$ does not attain its norm. When $X$ is reflexive, the ACPP is equivalent to the CPP. We next give parallel perturbative characterizations of the ACPP and the $p$-ACPP.

\begin{theorem} \label{acppchar1}
Let $X$ and $Y$ be Banach spaces and $1 \leq p < \infty$. The pair $(X, Y)$ has the  $p$-$\mathrm{ACPP}$ (resp. $\mathrm{ACPP}$) if and only if $T^* + K^*$ attains its norm whenever $T, K \in \mathcal{L}(X; Y)$ satisfy $\norma{T} < \norma{T+K}$ and $K^*$ is $p$-convergent (resp. $K$ is compact). 
\end{theorem}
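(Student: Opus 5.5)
We argue both directions directly from the definition of $\norma{\cdot}_{e,p}^{\rm adj}$ (resp. $\norma{\cdot}_e$). The proofs for the $p$-ACPP and the ACPP run in parallel; we only use that the class $\mathcal{J}$ of perturbations ($K$ with $K^*$ $p$-convergent, resp. $K$ compact) is a linear subspace of $\mathcal{L}(X;Y)$. This holds since the $p$-convergent operators form a linear space, and $(K_1+K_2)^* = K_1^*+K_2^*$.

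($\Rightarrow$) Assume the $p$-ACPP. Let $T, K \in \mathcal{L}(X;Y)$ with $\norma{T} < \norma{T+K}$ and $K \in \mathcal{J}$. Suppose, for contradiction, that $(T+K)^* = T^*+K^*$ fails to attain its norm. Applying the $p$-ACPP to the operator $S := T+K$ gives
$$\norma{S}_{e,p}^{\rm adj} = \norma{S}.$$
Since $-K \in \mathcal{J}$, the definition of the infimum yields
$$\norma{S}_{e,p}^{\rm adj} \leq \norma{S - K} = \norma{T} < \norma{T+K} = \norma{S},$$
which is a contradiction. Hence $T^*+K^*$ attains its norm.

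($\Leftarrow$) Assume the perturbative condition, and let $S \in \mathcal{L}(X;Y)$ be such that $S^*$ does not attain its norm. Trivially $\norma{S}_{e,p}^{\rm adj} \leq \norma{S}$, taking $K=0$. Suppose the inequality is strict. Then there is $K \in \mathcal{J}$ with $\norma{S-K} < \norma{S}$. Set $T := S - K$. Then $T + K = S$ and $\norma{T} < \norma{T+K}$, so the hypothesis gives that $T^*+K^* = S^*$ attains its norm, a contradiction. Thus $\norma{S}_{e,p}^{\rm adj} = \norma{S}$, which is the $p$-ACPP.

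The only point needing care is the closure of $\mathcal{J}$ under negation, and under addition if desired; this is immediate from linearity of the adjoint and of $p$-convergence (resp. compactness). Beyond that there is no real obstacle: the equivalence is essentially a reformulation of the definition via the substitution $T = S-K$.
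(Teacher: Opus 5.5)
Your proof is correct and is essentially the paper's argument: in the forward direction you bound the infimum by $\norma{S-K}=\norma{T}<\norma{S}$, and in the converse you substitute $T=S-K$. One minor point is that closure of the perturbation class under negation is never actually used, since the infimum defining $\norma{S}_{e,p}^{\rm adj}$ already ranges over $\norma{S-K}$ with $K$ itself in the class.
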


\begin{proof}
    Assume first that $(X, Y)$ has the $p$-ACPP, and let $T, K: X \to Y$ be bounded linear operators such that $K^*$ is $p$-convergent and $\norma{T} < \norma{T+K}$. Thus, 
    \begin{align*}
        \norma{T+K}_{e,p}^{\rm adj} & = \inf \conj{\norma{T+K-S}}{S^* \text{ is $p$-convergent}} \leq \norma{T} < \norma{T+K},
    \end{align*}
    and so the assumption implies that $T^* + K^* = (T+K)^*$ attain its norm.

    To prove the converse, we assume, for the sake of contradiction that there exists $T \in \mathcal{L}(X; Y)$ such that $T^*$ does not attain its norm and $\norma{T}_{e,p}^{\rm adj} < \norma{T}$. Thus, there exists $K: X \to Y$ such that $K^*$ is $p$-convergent and $\norma{T-K} < \norma{T}$. This implies that
    $  \norma{T-K} < \norma{(T-K) + K}   $
    and $(T-K)^* + K^* = T^*$ does not attain its norm, a contradiction.

    The proof of the compact case is analogous. 
\end{proof}

We conclude this paper with some remarks regarding the $p$-ACPP.

\begin{remarks} \rm 
(1) Let $X$ be a Banach space such that $B_X$ is relatively weakly $p$-precompact. If $(X, Y)$ has the $p$-ACPP, then it has the $p$-CPP. Indeed, if $T: X \to Y$ is a bounded linear operator and $K: X \to Y$ is a $p$-convergent operator satisfying $\norma{T} <\norma{T+K}$, then as $B_X$ is relatively weakly $p$-precompact, $K$ is compact. 
Consequently, $K^{*}$ is compact, and thus $p$-convergent. Since $(X,Y)$ has the $p$-ACPP, $T^* + K^*$ attains its norm. Finally, since $X$ is reflexive, $T+K$ also attains its norm.  \\
(2) The assumption that $B_X$ is relatively weakly $p$-precompact in item (1) cannot be omitted. Indeed, if $X$ is a non-reflexive Banach space, there exists a non-norm attaining linear functional $0 \neq x^*: X \to \K$. 
Taking $T = 0$ and $K= x^*$, we get that $K$ is $p$-convergent,
$\norma{T} = 0 < \norma{T+K}$, and $T+K = K$ does not attain its norm. This proves that $(X, \K)$ fails the $p$-CPP. However, since every adjoint operator from $\K^*$ into $X^*$ attains its norm, the pair $(X, \K)$ has the $p$-ACPP. \\
(3) Let $1\leq p<\infty$. If $(Y^*,X^*)$ has the $\mathrm{SMP}_p$
for adjoint operators, then $(X,Y)$ has the $p$-ACPP.  \\ 
{\rm (4)} Let $1 <p < \infty$ and $1 \leq q,r< \infty$. The pair $(\ell_p,\ell_q)$ has the $r$-ACPP if and only if $p > q$ or $r \geq q$. Indeed, suppose first that $q > 1$. Since $\ell_p$ and $\ell_q$ are reflexive, every operator from $\ell_q^*$ into $\ell_p^*$ is an adjoint operator. Hence, by Theorem \ref{acppchar1}, the pair $(\ell_p, \ell_q)$ has the $r$-ACPP if and only if $(\ell_q^*, \ell_p^*)$ has the $r$-CPP. By Examples 
\ref{pcpp2}(1), this occurs if and only if $p^* < q^*$ or $r \geq (q^*)^*$ = q, which is equivalent to $p > q$ or $r \geq q$.

If $q=1$, every operator from $\ell_p$ into $\ell_1$ is compact, hence norm-attaining. Consequently, every adjoint operator from $\ell_1^*$ into $\ell_p^*$ attains its norm, and $(\ell_p, \ell_1)$ has the $r$-ACPP for every $1 \leq r < \infty$, consistently with the condition $p > q$. \\
(5) The pair $(\ell_p, c_0)$ fails the $r$-ACPP for every $1 \leq r < \infty$. Indeed, considering the operator $D: \ell_p \to c_0$ given by $D((a_j)_j) = ((1-1/j)a_j)_j$, we have that $\norma{D} = 1$, $D^*$ does not attain its norm and that $D^*$ is $r$-convergent for every $1 \leq r < \infty$. Taking $T = 0$ and $K = D$ in Theorem \ref{acppchar1}, we are done. \\
(6) It follows from item (5) above that the pair $(\ell_2, c_0)$ does not have the $2$-ACPP, however this pair has the $2$-CPP by Examples \ref{pcpp2}(2). This examples shows that the $p$-CPP does not imply the $p$-ACPP. 
\end{remarks}

\noindent O. I. Blanco A.\\
Centro de Matem\'atica, Computa\c c\~ao e Cogni\c c\~ao \\
Universidade Federal do ABC \\
09.210-580 -- Santo André - SP -- Brazil.  \\
e-mail: oscarblanco3322@gmail.com

\medskip

\noindent V. C. C. Miranda\\
Departamento de Matemática\\
Instituto de Ciências Matemáticas e de Computação \\
Universidade de São Paulo \\
13566-590 -- São Carlos - SP -- Brazil  \\
e-mail: viniciusmiranda@icmc.usp.br

\end{document}